\documentclass[12pt]{amsart}
\usepackage{a4wide}
\usepackage[T1]{fontenc}
\usepackage{amssymb,amsmath,amsthm,latexsym}
\usepackage{mathrsfs}
\usepackage[usenames,dvipsnames]{color}
\usepackage{euscript}
\usepackage{graphicx}
\usepackage{mdwlist}
\usepackage{enumerate}
\usepackage{xcolor}
\usepackage{mathtools,dsfont,wasysym}
\usepackage{stmaryrd}
\usepackage{ulem}
\usepackage{hyperref}
\usepackage{enumitem}
\hypersetup{colorlinks=true, linkcolor=blue, citecolor=red, urlcolor=cyan}

\newtheorem{theorem}{Theorem}[section]
\newtheorem{lemma}[theorem]{Lemma}
\newtheorem{corollary}[theorem]{Corollary}
\newtheorem{proposition}[theorem]{Proposition}

\newcounter{maintheorem}

\theoremstyle{remark}

\theoremstyle{definition}

\numberwithin{equation}{section}
\makeatother

\newcommand{\R}{\mathbb{R}}
\newcommand{\N}{\mathbb{N}}
\newcommand{\e}{\varepsilon}

\newcommand{\nn}[1]{{\left\vert\kern-0.25ex\left\vert\kern-0.25ex\left\vert #1 \right\vert\kern-0.25ex \right\vert\kern-0.25ex \right\vert}}

\renewcommand{\leq}{\leqslant}
\renewcommand{\geq}{\geqslant}

\DeclareMathOperator{\diam}{diam}

\DeclareMathOperator{\spn}{span}

\newcounter{smallromans}

{\end{list}}


\newcommand{\sna}{\operatorname{SNA}}

\newcommand{\lip}{\operatorname{Lip}_0}
\newcommand{\Lip}{\operatorname{Lip}}
\newcommand{\card}{\operatorname{card}}
\newcommand{\dens}{\operatorname{dens}}
\newcommand{\cof}{\operatorname{cof}}

\makeatletter
\setcounter{tocdepth}{3}

\renewcommand{\tocsection}[3]{%
	\indentlabel{\@ifnotempty{#2}{\bfseries\ignorespaces#1 #2\quad}}\bfseries#3}
\renewcommand{\tocsubsection}[3]{%
	\indentlabel{\@ifnotempty{#2}{\ignorespaces#1 #2\quad}}#3}

\newcommand\@dotsep{4.5}
\def\@tocline#1#2#3#4#5#6#7{\relax
	\ifnum #1>\c@tocdepth 
	\else
	\par \addpenalty\@secpenalty\addvspace{#2}%
	\begingroup \hyphenpenalty\@M
	\@ifempty{#4}{%
		\@tempdima\csname r@tocindent\number#1\endcsname\relax
	}{%
		\@tempdima#4\relax
	}%
	\parindent\z@ \leftskip#3\relax \advance\leftskip\@tempdima\relax
	\rightskip\@pnumwidth plus1em \parfillskip-\@pnumwidth
	#5\leavevmode\hskip-\@tempdima{#6}\nobreak
	\leaders\hbox{$\m@th\mkern \@dotsep mu\hbox{.}\mkern \@dotsep mu$}\hfill
	\nobreak
	\hbox to\@pnumwidth{\@tocpagenum{\ifnum#1=1\bfseries\fi#7}}\par
	\nobreak
	\endgroup
	\fi}
\AtBeginDocument{%
	\expandafter\renewcommand\csname r@tocindent0\endcsname{0pt}
}
\def\l@subsection{\@tocline{2}{0pt}{2.5pc}{5pc}{}}
\makeatother

\setlength{\parskip}{.5\baselineskip}%

\makeatletter



\begin{document}
\title[On isometric embeddings into the set SNA]{On isometric embeddings into the set of strongly norm-attaining Lipschitz functions}


\author[S.~Dantas]{Sheldon Dantas}
\address[S.~Dantas]{Departament de Matem\`atiques and Institut Universitari de Matem\`atiques i Aplicacions de Castell\'o (IMAC), Universitat Jaume I, Campus del Riu Sec. s/n, 12071 Castell\'o, Spain \href{https://orcid.org/0000-0001-8117-3760}{ORCID: \texttt{0000-0001-8117-3760}}}
\email{\texttt{dantas@uji.es}}

\author[R. Medina]{Rubén Medina}
\address[R. Medina]{Universidad de Granada, Facultad de Ciencias. Departamento de Análisis Matemático, 18071-Granada (Spain); and Czech Technical University in Prague, Faculty of Electrical Engineering. Department of Mathematics, Technická 2, 166 27 Praha 6 (Czech Republic) 
\href{https://orcid.org/0000-0002-4925-0057}{ORCID: \texttt{0000-0002-4925-0057}}}
\email{rubenmedina@ugr.es}

\author[A.\ Quilis]{Andr\'es Quilis}
\address[Andr\'es Quilis]{Instituto Universitario de Matemática Pura y Aplicada, Universitat Politècnica de València, Camí de Vera, s/n; and Czech Technical University in Prague, Faculty of Electrical Engineering. Department of Mathematics, Technická 2, 166 27 Praha 6 (Czech Republic) 
\href{https://orcid.org/0000-0001-6022-9286}{ORCID: \texttt{0000-0001-6022-9286} }}
\email{anquisan@posgrado.upv.es}

\author[\'O.\ Rold\'an]{\'Oscar Rold\'an}
\address[\'Oscar Rold\'an]{Departamento de An\'{a}lisis Matem\'{a}tico,
Universidad de Valencia, Doctor Moliner 50, 46100 Burjasot (Valencia), Spain.
\href{https://orcid.org/0000-0002-1966-1330}{ORCID: \texttt{0000-0002-1966-1330} }}
\email{oscar.roldan@uv.es}

\thanks{}

\date{\today}
\keywords{Strong norm attainment; space of Lipschitz functions; linear
subspaces; embedding of $c_0$.}
\subjclass[2020]{46B04, 54E50 (primary), and 46B20, 46B87 (secondary)}

\begin{abstract} In this paper, we provide an infinite metric space $M$ such that the set $\sna(M)$ of strongly norm-attaining Lipschitz functions does not contain a subspace which is isometric to $c_0$. This answers a question posed by Antonio Avilés, Gonzalo Martínez Cervantes, Abraham Rueda Zoca, and Pedro Tradacete. On the other hand, we prove that $\sna(M)$ contains an isometric copy of $c_0$ whenever $M$ is a metric space which is not uniformly discrete. In particular, the latter holds true for infinite compact metric spaces while it does not for proper metric spaces. Some positive results in the non-separable setting are also given. 
\end{abstract}
\maketitle
\tableofcontents


\section{Introduction}

Let $M$ be a metric space. We consider the subset $\sna(M)$ of $\lip(M)$ of all strongly norm-attaining Lipschitz functions on $M$. In 2016, Marek Cúth, Michal Doucha, and Przemysław Wojtaszczyk \cite{CDW16} proved that $\ell_{\infty}$ (and hence $c_0$) embeds isomorphically in $\lip(M)$ for any infinite metric space $M$. One year later, this result was improved by Marek Cúth and Michal Johanis and it is known now that $\ell_{\infty}$ (and hence $c_0$) embeds isometrically in $\lip(M)$ \cite{CJ17}. Motivated by the papers \cite{AMRTpre, Godefroy01, KRpre}, we turn our attention to the study of the analogous problems for the subset $\sna(M)$. Drastically different from the classical norm-attaining theory, where Martin Rmoutil \cite{Rmoutil17} proved that the set of all norm-attaining functionals needs not contain $2$-dimensional spaces, Antonio Avilés, Gonzalo Martínez Cervantes, Abraham Rueda Zoca, and Pedro Tradacete \cite{AMRTpre} provided a beautiful and interesting construction, and showed that $\sna(M)$ contains an isomorphic copy of $c_0$ for every infinite metric space $M$ (answering \cite[Question 2]{KRpre}). At the very ending of that paper, the authors wondered whether an isometric version of this result holds true (see \cite[Remark 3.6]{AMRTpre}).

In this paper, we answer the latter question by providing an example of an infinite uniformly discrete metric space $M$ so that $\sna(M)$ does {\it not} contain any subspace which is isometric to $c_0$ (see Theorem \ref{main-theorem}). On the other hand, we prove that $\sna(M)$ does contain an isometric copy of $c_0$ whenever $M$ is infinite but not uniformly discrete (see Theorem \ref{caso-compacto}). It turns out that this is no longer true even for proper metric spaces (see Theorem \ref{ContraejemploProper}). We conclude the paper by tackling the problem in the non-separable setting; we prove that whenever $\dens(M') = \Gamma > \omega_0$, then $\sna(M)$ contains an isometric copy of $c_0(\Gamma)$ (see Theorem \ref{gamma}).

\section{Preliminaries and notation}

Throughout the text, all the vector spaces will be considered to be {\it real}. Let $(M,d)$ be a pointed metric space (that is, a metric space with a distinguished point $0$). We denote by $\lip(M)$ the Banach space of all Lipschitz functions $f:M\rightarrow \mathbb{R}$ such that $f(0)=0$ endowed with the Lipschitz norm
\begin{equation*} 
\|f\|_{\Lip}:=\sup\left\{ \frac{|f(y)-f(x)|}{d(x,y)}\colon x,y\in M,\, x\neq y \right\}.
\end{equation*} 
We say that a Lipschitz function $f\in\lip(M)$ \textit{strongly attains its norm}, or that it is {\it strongly norm-attaining}, if there exist two different points $p,q\in M$ such that $$\|f\|_{\Lip} = \frac{|f(p)-f(q)|}{d(p,q)}.$$
The set of strongly norm-attaining Lipschitz functions on $M$ will be denoted by $\sna(M)$. In the past few years, this topic has been intensively studied. We send the reader to \cite{AMRTpre, CCGMR19, Chiclana, CGMR21, CM19, CM2022, CCM20, GPPR18, GPR17, Godefroy15, Godefroy16, JMRpre, KMS16, KRpre} and the references therein. 

\vspace{0.2cm} 

In what concerns this paper, without loss of generality, we will be reducing ourselves to the following contexts:
\begin{itemize}
    \item[(i)] All the metric spaces will be considered to be complete and infinite.
    \item[(ii)] The choice of the distinguised point $0$ in $M$ is irrelevant.
\end{itemize}
The reason we can consider (i) is that, by continuity, Lipschitz mappings have a unique norm-preserving extension to the completion of $M$. For (ii), let us notice that since the mapping $f\mapsto f-f(0)$ is a linear isometry in $\lip(M)$ that completely preserves the strong norm-attainment behaviour of the mappings, we do not need to worry about the choice of the distinguished point. Clearly, the problem of the isometric containment of $c_0$ in $\sna(M)$ is restricted to infinite metric spaces $M$.

\vspace{0.2cm} 

For us, the expression linear subspaces of $\sna(M)$ should be understood as linear subspaces of $\lip(M)$ consisting of strongly norm-attaining Lipschitz functions. Also, if $Y$ is a Banach space, then we say that $Y$ isometrically embeds in $\sna(M)$ (or, equivalently, $\sna(M)$ contains an isometric copy of $Y$), whenever there exists a linear isometric embedding $U: Y \rightarrow \lip(M)$ such that $U(Y) \subseteq \sna(M)$.

\vspace{0.2cm} 

The \textit{separation radius} of a point $x \in M$ is defined by 
\begin{equation*} 
R(x):=\inf \Big\{ d(x,y)\colon y\in M\setminus\{x\} \Big\},
\end{equation*} 
which will be central in some of the upcoming results. We will say that a point $x$ from a metric space $M$ \textit{attains its separation radius} whenever there is $y\in M$ such that $R(x)=d(x,y)$.
The symbol $M'$ stands for the set of all cluster points of $M$. Recall that a metric space $M$ is said to be \textit{discrete} if $M'=\emptyset$, \textit{uniformly discrete} if $\inf\{R(x)\colon x\in M\}>0$, and \textit{proper} if every closed and bounded subset of $M$ is compact. The notation $B(x,R)$ stands for the closed ball of center $x\in M$ and radius $R>0$.

\vspace{0.2cm}

Thorughout the entire note, we will denote by $c_0(\Gamma)$ the space of all real valued functions over a set $\Gamma$ satisfying that for every element $x\in c_0(\Gamma)$, the set $\{\gamma\in\Gamma\colon |x(\gamma)|\geq\varepsilon\}$ is finite for every $\e>0$. If $\Gamma$ is countable we will refer to $c_0(\Gamma)$ simply as $c_0$.

Let $X$ be a separable Banach space with a Schauder basis denoted by $\{x_n\}_{n=1}^\infty$. We say that a sequence $\{y_n\}_{n=1}^\infty$ in a Banach space $Y$ is \textit{(isometrically) equivalent} to the basis $\{x_n\}_{n=1}^\infty$ if there exists a linear (isometric) isomorphism $T\colon \overline{\text{span}}\{y_n\colon n\in\mathbb{N}\}\rightarrow X$ such that $T(y_n)=x_n$ for all $n\in\mathbb{N}$. The following straightforward facts will be used throughout the text without any explicit reference.
\begin{itemize}
    \item[(i)] A sequence $\{x_n\}_{n=1}^\infty$ is isometrically equivalent to the canonical basis of $c_0$ if and only if the equality $\big\|\sum_{n=1}^\infty \lambda_n x_n\big\|=\max_{n}|\lambda_n|$ holds for every sequence $\{\lambda_n\}_{n=1}^\infty \in c_0$.
    \item[(ii)] If a sequence $\{x_n\}_{n=1}^\infty$ is isometrically equivalent to the canonical basis of $c_0$, then so is the sequence $\{\varepsilon_nx_n\}_{n=1}^\infty$, where $\varepsilon_n\in\{-1,1\}$ for every $n\in\mathbb{N}$.
    \item[(iii)] Any subsequence of a sequence which is isometrically equivalent to the canonical basis of $c_0$ is once again isometrically equivalent to the same basis.
\end{itemize}

\vspace{0.2cm}

Given any set $A$ and a natural number $k\in\mathbb{N}$, we denote by $A^{[k]}$ the set of all subsets of $A$ with exactly $k$ elements. We will use Ramsey's Theorem intensively throughout the text, which ensures that given any infinite set $A$ and any finite partition of the set $A^{[k]}$, $\{B_1,\dots,B_n\}$ for some $n\in\mathbb{N}$, there exists an infinite subset $S$ of $A$ and a number $i\in\{1,\dots n\}$ such that $S^{[k]}$ is contained in $B_i$ (see, for instance, \cite[Proposition 6.4]{FHHMZ11}).

\section{Some useful tools}

In this section, we state and prove some auxiliary results that will be crucial for the rest of the note. The following are three essential yet straightforward lemmas that hold in any complete metric space. We provide their proofs for the sake of completeness.

\begin{lemma}
\label{lemma1}
Let $M$ be a complete metric space. Suppose that $\{f_n\}_{n=1}^\infty\subseteq \lip(M)$ is a sequence isometrically equivalent to the canonical basis of $c_0$. Then, for every $n\in\mathbb{N}$, if the function $f_n$ strongly attains its Lipschitz norm at a pair of points $x_n,y_n\in M$, then $f_m(x_n)=f_m(y_n)$ for every $m\in\mathbb{N}\setminus\{n\}$. 
\end{lemma}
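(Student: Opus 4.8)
The plan is to exploit the only structural information available, namely the defining property of a sequence isometrically equivalent to the canonical basis of $c_0$: every finite linear combination satisfies $\big\|\sum_k \lambda_k f_k\big\|_{\Lip} = \max_k |\lambda_k|$. Two consequences will be used: first, each individual function has $\|f_k\|_{\Lip} = 1$; second, for any $m \neq n$ both sums $f_n + f_m$ and $f_n - f_m$ have Lipschitz norm exactly $1$. Recording that these norms equal $1$ (not merely $\leq 1$) is the small point on which everything turns.

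Now fix $n \in \mathbb{N}$ and suppose $f_n$ strongly attains its norm at a pair of distinct points $x_n, y_n \in M$; fix also an arbitrary $m \in \mathbb{N} \setminus \{n\}$. After possibly interchanging $x_n$ and $y_n$, we may assume $f_n(x_n) - f_n(y_n) = d(x_n,y_n) =: \delta$, and $\delta > 0$ because $x_n \neq y_n$. Set $a := f_m(x_n) - f_m(y_n)$. Evaluating the quotient defining the Lipschitz norm of $f_n + f_m$ and of $f_n - f_m$ at the pair $\{x_n, y_n\}$ and using $\|f_n \pm f_m\|_{\Lip} = 1$ yields $|\delta + a| \leq \delta$ and $|\delta - a| \leq \delta$. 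The first inequality gives $-2\delta \leq a \leq 0$ and the second gives $0 \leq a \leq 2\delta$; together they force $a = 0$, that is, $f_m(x_n) = f_m(y_n)$.

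Since $m \neq n$ was arbitrary, this proves the statement for the chosen $n$, and since $n$ was arbitrary the lemma follows. I do not expect any genuine obstacle here: the argument is a direct two-sided estimate, and the only thing to be attentive to is that strong norm attainment provides a genuine pair of \emph{distinct} points (so $\delta > 0$) and that the isometric $c_0$-equivalence pins the relevant norms down to be exactly $1$, which is precisely what makes the two inequalities on $a$ collapse to $a = 0$.
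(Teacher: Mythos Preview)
Your proof is correct and follows essentially the same approach as the paper's: both arguments exploit that $\|f_n \pm f_m\|_{\Lip} = 1$ and evaluate at the norm-attaining pair $(x_n,y_n)$. The only cosmetic difference is that the paper argues by contradiction using one sign (after a WLOG sign-flip of $f_m$), whereas you use both signs $f_n + f_m$ and $f_n - f_m$ simultaneously to squeeze $a$ to zero directly.
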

\begin{proof}
    Let $n\in\mathbb{N}$ be fixed. Suppose that the function $f_n$ strongly attains its Lipschitz norm at a pair of points $x_n,y_n\in M$. Without loss of generality, we may (and we do) assume that $|f_n(x_n)-f_n(y_n)|=f_n(x_n)-f_n(y_n)=d(x_n,y_n)$. Let us suppose by contradiction that there exist natural numbers $m\neq n$ such that $f_m(x_n)\neq f_m(y_n)$. We may again suppose without loss of generality that $f_m(x_n)>f_m(y_n)$ (otherwise we may consider the sequence $\{g_k\}_{k=1}^\infty$ defined as $g_m=-f_m$ and $g_k=f_k$ for $k\neq m$, which is still equivalent to the $c_0$ basis). Set $f:=f_n+f_m$. Then, we have that 
    \begin{align*}
        |f(x_n)-f(y_n)|&\geq (f_n+f_m)(x_n)-(f_n+f_m)(y_n)\\
        &=f_n(x_n)-f_n(y_n)+f_m(x_n)-f_m(y_n)\\
        &>d(x_n,y_n),
    \end{align*}
    which yields a contradiction with the fact that $f$ is $1$-Lipschitz.
\end{proof}

\begin{lemma}
\label{lemma2}
Let $M$ be a complete metric space. Suppose that $\{f_n\}_{n=1}^\infty\subseteq \lip(M)$ is a sequence equivalent to the canonical basis of $c_0$. Then, for all $p\in M$, $\lim_{n\rightarrow\infty}|f_n(p)|=0$. 
\end{lemma}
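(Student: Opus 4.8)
The plan is to argue by contradiction. Suppose there is a point $p\in M$ and a $\delta>0$ such that $|f_n(p)|\geq\delta$ for infinitely many indices $n$; after passing to a subsequence (which, by fact (iii) above, is again equivalent to the $c_0$ basis) and using fact (ii) to flip signs, we may assume $f_n(p)\geq\delta$ for all $n$. Since each $f_n$ is $1$-Lipschitz (the basis is \emph{isometrically} equivalent to the canonical basis of $c_0$, so in particular the individual norms are $1$; if one only assumes equivalence with constant $C$ the same argument works with $\delta$ replaced by $\delta/C$), we have $f_n(0)=0$, hence $d(0,p)\geq |f_n(p)-f_n(0)|\geq\delta$, so $p\neq 0$. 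The key point is to exploit that, for a sequence equivalent to the $c_0$ basis, any \emph{finite} sum $\sum_{n=1}^{N} f_n$ has norm bounded by the equivalence constant, uniformly in $N$.

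Concretely, fix $N\in\mathbb{N}$ and consider $g_N:=\sum_{n=1}^{N} f_n$. On one hand, by equivalence to the $c_0$ basis, $\|g_N\|_{\Lip}\leq C$ for some constant $C$ independent of $N$ (with $C=1$ in the isometric case). On the other hand,
\begin{equation*}
|g_N(p)-g_N(0)| = \Big|\sum_{n=1}^{N} f_n(p)\Big| = \sum_{n=1}^{N} f_n(p) \geq N\delta,
\end{equation*}
so that $\|g_N\|_{\Lip}\geq \dfrac{|g_N(p)-g_N(0)|}{d(0,p)}\geq \dfrac{N\delta}{d(0,p)}$. Letting $N\to\infty$ forces $\|g_N\|_{\Lip}\to\infty$, contradicting the uniform bound. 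Hence $\lim_{n\to\infty}|f_n(p)|=0$ for every $p\in M$.

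I do not expect any serious obstacle here; the only mild subtlety is bookkeeping with signs, which is handled cleanly by passing to a subsequence and applying fact (ii), and making sure the equivalence constant is used consistently (it is harmless, and trivial in the isometric case the lemma is stated for). Note this argument does not even use completeness of $M$ nor Lemma \ref{lemma1}; it is a direct consequence of the fact that, unlike in $\ell_1$ or $c$, the partial sums of the $c_0$ basis stay bounded while evaluating a $c_0$-equivalent sequence at a fixed nonzero point and summing yields an unbounded quantity unless the values tend to $0$.
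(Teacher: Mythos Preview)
Your proof is correct and follows essentially the same approach as the paper's: both argue by contradiction that if $|f_n(p)|\not\to 0$ then a suitable signed finite sum $\sum_{n=1}^N \varepsilon_n f_n$ would have Lipschitz norm exceeding the equivalence constant $C$, contradicting the bound coming from $c_0$-equivalence. The only cosmetic difference is that you first pass to a subsequence and flip signs so that every $f_n(p)\geq\delta$, whereas the paper works directly with the full sequence, notes that $\sum_{n=1}^N|f_n(p)|$ eventually exceeds $C\cdot d(p,0)$, and then chooses the signs $\varepsilon_n$ accordingly.
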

\begin{proof}
Let $T\colon c_0\rightarrow \overline{\text{span}}\{f_n\colon n\in\mathbb{N}\}$ be a linear isomorphism with $T(e_n)=f_n$ for all $n\in \N$ and set $C=\|T\|$. Suppose that for some $p\in M$, the sequence $\{f_n(p)\}_{n=1}^{\infty}$ does not converge to $0$. Then, there exists $N\in\mathbb{N}$ such that $\sum_{n=1}^N |f_{n}(p)|> C\cdot d(p,0)$. However, this implies that there exist $\{\varepsilon_n\}_{n=1}^N\subseteq\{-1,1\}^N$ such that the function $\sum_{k=1}^N \varepsilon_n f_{n}$ is not $C$-Lipschitz, contradicting the fact that the operator norm of $T$ is $C$.
\end{proof}

The following lemma is an immediate consequence of the triangle inequality.

\begin{lemma} \label{fact} Let $f \in \lip(M)$ be given. Suppose that $x, y \in M$ with $x\not=y$ are such that $|f(x) - f(y)| = d(x,y)$. Then, we have that 
\begin{equation*}
    |f(x) - C| + |f(y) - C| \geq d(x,y)
\end{equation*}
for every $C \in \R$.
\end{lemma}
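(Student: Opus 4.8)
The plan is to observe that this is a one-line consequence of the triangle inequality for the absolute value on $\R$. Concretely, for any $C \in \R$ I would write $f(x) - f(y) = (f(x) - C) - (f(y) - C)$, and then apply $|a - b| \leq |a| + |b|$ with $a = f(x) - C$ and $b = f(y) - C$ to obtain
\begin{equation*}
    |f(x) - C| + |f(y) - C| \geq |(f(x) - C) - (f(y) - C)| = |f(x) - f(y)|.
\end{equation*}
By hypothesis $|f(x) - f(y)| = d(x,y)$, so chaining these gives the claimed inequality. No completeness of $M$ or any structure beyond the metric and the single numerical identity is needed, and the bound holds for every real $C$ simultaneously.

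I do not anticipate any obstacle here; the only thing worth flagging is that the inequality is sharp exactly when $C$ lies in the closed interval between $f(x)$ and $f(y)$ (which is presumably the point of stating it, since in later arguments one chooses $C$ to be some third value $f_m(p)$ and wants to conclude $C$ lies between $f_n(x_n)$ and $f_n(y_n)$). If desired I would add the remark that equality holds iff $\min\{f(x),f(y)\} \leq C \leq \max\{f(x),f(y)\}$, but strictly this is not part of the statement and can be omitted from the proof.
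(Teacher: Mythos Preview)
Your proof is correct and matches the paper's own justification, which simply notes that the lemma is an immediate consequence of the triangle inequality. The extra remark on the equality case is not needed (and in fact the paper uses the lemma only as a lower bound, not to locate $C$), so you can safely omit it.
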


Finally, for the upcoming positive results of the paper, we need the following generalization of \cite[Lemma 3.1]{AMRTpre}.

\begin{lemma}\label{prop:tent-c02}
Let $\Gamma$ be a nonempty index set. Let $M$ be a pointed metric space such that there exist two sets $\{x_{\gamma}\}_{\gamma\in\Gamma},\{y_\gamma\}_{\gamma\in\Gamma}\subseteq M$ with $x_\gamma\neq y_\gamma$, $x_\alpha\neq x_\beta$ for $\gamma,\alpha,\beta\in\Gamma$, $\alpha\neq\beta$. If $d(x_\alpha, x_\beta)\geq d(x_\alpha,y_\alpha) + d(x_\beta,y_\beta)$ for every $\alpha\neq \beta\in \Gamma$,
then there is a linear subspace of $\sna(M)$ isometric to $c_0(\Gamma)$.
\end{lemma}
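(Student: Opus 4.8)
The plan is to adapt the tent-function construction behind \cite[Lemma 3.1]{AMRTpre} to an arbitrary index set $\Gamma$. For each $\gamma\in\Gamma$ put $r_\gamma:=d(x_\gamma,y_\gamma)>0$ and define $f_\gamma\colon M\to\R$ by
$$f_\gamma(t):=\max\{0,\,r_\gamma-d(t,x_\gamma)\}.$$
Then $f_\gamma$ is $1$-Lipschitz, it vanishes outside the open ball $U_\gamma:=\{t\in M\colon d(t,x_\gamma)<r_\gamma\}$, and since $f_\gamma(x_\gamma)=r_\gamma$ and $f_\gamma(y_\gamma)=0$ we get $\|f_\gamma\|_{\Lip}=1$, strongly attained at the pair $(x_\gamma,y_\gamma)$. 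The hypothesis $d(x_\alpha,x_\beta)\geq r_\alpha+r_\beta$ at once forces the balls $\{U_\gamma\}_{\gamma\in\Gamma}$ to be pairwise disjoint, and the same triangle-inequality estimate gives $y_\gamma\notin U_\beta$ for every $\beta\neq\gamma$ (and trivially $y_\gamma\notin U_\gamma$).

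Next I would introduce the candidate embedding. For $\lambda=(\lambda_\gamma)_{\gamma\in\Gamma}\in c_0(\Gamma)$ the function $F_\lambda:=\sum_{\gamma\in\Gamma}\lambda_\gamma f_\gamma$ is well defined pointwise, since by disjointness of the $U_\gamma$ at most one summand is nonzero at any point. I would then prove that $\|F_\lambda\|_{\Lip}=\max_{\gamma}|\lambda_\gamma|=\|\lambda\|_\infty$. The inequality ``$\geq$'' is immediate: evaluating at $(x_\gamma,y_\gamma)$ yields $|F_\lambda(x_\gamma)-F_\lambda(y_\gamma)|=|\lambda_\gamma|\,r_\gamma$, because the only nonzero contribution at $x_\gamma$ is $\lambda_\gamma f_\gamma(x_\gamma)=\lambda_\gamma r_\gamma$, while $F_\lambda(y_\gamma)=0$ by the previous paragraph. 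The inequality ``$\leq$'' is the technical core: given $s\neq t$ in $M$, one separates the cases (a) $s,t\in U_\gamma$ for a common $\gamma$; (b) $s\in U_\alpha$, $t\in U_\beta$ with $\alpha\neq\beta$; (c) $s\in U_\alpha$ and $t\notin\bigcup_\gamma U_\gamma$; (d) $s,t\notin\bigcup_\gamma U_\gamma$. Cases (a) and (d) are trivial; in (b) and (c) one bounds $|F_\lambda(s)-F_\lambda(t)|$ by $\|\lambda\|_\infty$ times the sum of the relevant tent heights, and then uses the triangle inequality together with the separation hypothesis (in case (b)) or with $d(t,x_\alpha)\geq r_\alpha$ (in case (c)) to dominate that sum by $d(s,t)$.

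Once the norm identity is in place, choosing $\gamma_0$ with $|\lambda_{\gamma_0}|=\|\lambda\|_\infty$ (which exists because $\lambda\in c_0(\Gamma)$) shows that $F_\lambda$ strongly attains its norm at $(x_{\gamma_0},y_{\gamma_0})$, so every $F_\lambda$ belongs to $\sna(M)$ (as does $F_0=0$). It only remains to arrange vanishing at the base point: passing to $\tilde f_\gamma:=f_\gamma-f_\gamma(0)\in\lip(M)$ and noting that $F_\lambda(0)$ is well defined, one has $\sum_\gamma\lambda_\gamma\tilde f_\gamma=F_\lambda-F_\lambda(0)$, and subtracting a constant affects neither the Lipschitz norm nor the strong norm-attainment. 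Hence $U\colon c_0(\Gamma)\to\lip(M)$ given by $U(\lambda):=F_\lambda-F_\lambda(0)$ is a well-defined linear isometry with $U(c_0(\Gamma))\subseteq\sna(M)$, and $U(c_0(\Gamma))$ is the desired subspace. I expect the main obstacle to be the upper Lipschitz estimate in the mixed cases (b) and (c): this is exactly where the separation hypothesis on the $x_\gamma$ is used, and pinning the constant down to $1$ (rather than to some isomorphism constant) is what dictates taking tents of radius precisely $r_\gamma$ centered exactly at the points $x_\gamma$.
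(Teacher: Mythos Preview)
Your proposal is correct and follows the same tent-function construction as the paper. In fact, your treatment of the mixed-ball case (b) is cleaner than the paper's: you bound $|\lambda_\alpha f_\alpha(s)-\lambda_\beta f_\beta(t)|\le\|\lambda\|_\infty\bigl(f_\alpha(s)+f_\beta(t)\bigr)$ directly and then use $d(s,t)\ge d(x_\alpha,x_\beta)-d(s,x_\alpha)-d(t,x_\beta)\ge r_\alpha+r_\beta-d(s,x_\alpha)-d(t,x_\beta)=f_\alpha(s)+f_\beta(t)$, whereas the paper splits this case further according to the signs of $\lambda_\alpha$ and $\lambda_\beta$ before reaching the same conclusion.
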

\begin{proof}
For each $\gamma\in \Gamma$, define $f_\gamma:M\rightarrow \mathbb{R}$ by
$$f_\gamma(x):= \max\{0, d(x_\gamma,y_\gamma) - d(x, x_\gamma)\} \ \ \ (x\in M).
$$
Let $\lambda :=\{\lambda_\gamma\colon \gamma\in\Gamma\}\in c_0(\Gamma)$ and let $\gamma_0\in\Gamma$ be such that $|\lambda_{\gamma_0}|=\| \lambda \|_\infty$. Finally, set $f:M\rightarrow\mathbb{R}$ to be defined as
$$f:=\sum_{\gamma\in\Gamma} \lambda_\gamma (f_\gamma-f_\gamma(0)).$$
We will be done when we check that $f$ is an element of $\lip(M)$ with Lipschitz norm $\|\lambda\|_{\infty}$ strongly attaining its norm at the pair $(x_{\gamma_0}, y_{\gamma_0})$.

It is easy to check that, for all $\gamma\in\Gamma$, $f_\gamma$ strongly attains its norm at the pair $(x_\gamma, y_\gamma)$ with $\|f_\gamma\|_{\Lip}=1$. Also, let us notice that the support of $f$ lies in $\bigcup_{\gamma\in\Gamma} B(x_\gamma, d(x_\gamma,y_\gamma))$. 
Note that we can assume without loss of generality that $f_\gamma(0)=0$ for all $\gamma\in\Gamma$.

Let us now prove that $\|f\|_{\Lip}=|\lambda_{\gamma_0}|$. Let $x\neq y$ be two points in $M$. We will distinguish several cases. 
\begin{itemize}
\item[(a)] If both $x$ and $y$ lie outside of $\bigcup_{\gamma\in\Gamma} B(x_\gamma, d(x_\gamma,y_\gamma))$, then clearly $|f(x)-f(y)|=0$.
\vspace{0.2cm} 
\item[(b)] Assume that $x\notin \bigcup_{\gamma\in\Gamma} B(x_\gamma, \delta_\gamma)$ and that there exists some $\alpha\in\Gamma$ such that $y\in B(x_\alpha,d(x_\alpha,y_\alpha))$. Since $d(x,y)\geq d(x_\alpha,x)-d(x_\alpha,y)\geq d(x_\alpha,y_\alpha)-d(x_\alpha,y)\geq 0$, we have 
$$\frac{|f(x)-f(y)|}{d(x,y)}=\frac{|\lambda_\alpha|(d(x_\alpha,y_\alpha) - d(x_\alpha, y))}{d(x,y)}\leq |\lambda_{\gamma_0}|.$$
\item[(c)] Assume now that there is some $\gamma\in\Gamma$ such that $x,y\in B(x_\gamma,d(x_\gamma,y_\gamma))$. Then, since $d(x_\gamma,y)\leq d(x_\gamma,x)+d(x,y)$ and $d(x_\gamma,x)\leq d(x_\gamma,y)+d(x,y)$, we have
$$\frac{|f(x)-f(y)|}{d(x,y)}=|\lambda_\gamma|\frac{|(d(x_\gamma,y_\gamma)-d(x_\gamma, x))-(d(x_\gamma,y_\gamma)-d(x_\gamma, y))|}{d(x,y)}\leq |\lambda_{\gamma_0}|.$$
\vspace{0.2cm} 
\item[(d)] Finally, if there are different $\alpha,\beta\in\Gamma$ such that $x\in B(x_\alpha,d(x_\alpha,y_\alpha))$ and $y\in B(x_\beta, d(x_\beta,y_\beta))$, assuming without loss of generality that $|\lambda_\alpha|\geq |\lambda_\beta|>0$, we have
\begin{equation}\label{eqn:2asteriscos}
\frac{|f(x)-f(y)|}{d(x,y)}=\frac{|\lambda_\alpha(d(x_\alpha,y_\alpha) - d(x,x_\alpha)) - \lambda_\beta(d(x_\beta,y_\beta)-d(y,x_\beta))|}{d(x,y)}.
\end{equation}

We will distinguish 2 cases now.

\vspace{0.5cm} 
\noindent
\textit{Case 1}: $\lambda_\alpha$ and $\lambda_\beta$ have the same sign. 
\vspace{0.5cm} 

We can assume for the following computation that both are positive (if not we simply multiply by $(-1)$ if needed). For $\gamma\in\{\alpha,\beta\}$, define $g_\gamma:M\rightarrow \mathbb{R}$ by $g_\gamma(z):=\lambda_\gamma(d(x_\gamma,y_\gamma) - d(x_\gamma, z))$ for all $z\in M$. Notice that in (\ref{eqn:2asteriscos}) we can clearly assume that $f(x)\neq f(y)$. If $f(x)>f(y)$, since $f(y)\geq 0\geq g_{\alpha}(y)$, we get that
\begin{align*}
\frac{|f(x)-f(y)|}{d(x,y)}&\leq\frac{f(x)-g_\alpha(y)}{d(x,y)}\leq |\lambda_{\gamma_0}|.
\end{align*}
The case where $f(y)>f(x)$ can be solved similarly with a symmetric argument.



\vspace{0.5cm} 
\noindent
\textit{Case 2}: $\lambda_\alpha$ and $\lambda_\beta$ have different signs. 

\vspace{0.5cm} 

We can assume that $\lambda_\alpha$ is positive and $\lambda_\beta$ is negative. By the triangle inequality, it is easy to see that 
\begin{equation*}
    \lambda_\alpha(d(x_\alpha,y_\alpha) - d(x_\alpha,y)) \leq (-\lambda_\alpha)(d(x_\beta,y_\beta)-d(x_\beta,y)).
\end{equation*}
Hence, if $g_\alpha$ is defined as in Case 1, then we get once more that $g_\alpha(y) \leq f(y)$ since $f(y)\leq 0$. Now we can repeat the same computations as in Case 1.

\end{itemize}

This proves that $\|f\|_{\Lip}\leq|\lambda_{\gamma_0}|$. Finally, it is clear that $f$ strongly attains its Lipschitz norm at the pair $(x_{\gamma_0},y_{\gamma_0})$ and we are done.
\end{proof}


\section{The isometric containment of $c_0$ in $\sna(M)$}

In this section we turn our attention to the main results of the paper.

\subsection{A bounded and uniformly discrete counterexample}

In this subsection, we construct an infinite complete metric space $M$ such that the set $\sna(M)$ of strongly norm-attaining Lipschitz functions does not contain an isometric copy of $c_0$, answering a question posed in \cite[Remark 3.6]{AMRTpre}. It is worth mentioning that no point of the constructed metric space attains its separation radius.

\begin{theorem} \label{main-theorem} There exists an infinite bounded uniformly discrete complete metric space $M$ such that $c_0$ is not isometrically contained in $\sna(M)$ and for which no point in $M$ attains its separation radius.
\end{theorem}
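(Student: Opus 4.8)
The plan is to build $M$ as a countable union of carefully scaled finite ``layers'', each layer being a combinatorial structure designed so that no Lipschitz function can strongly attain its norm on that layer unless it looks, on that layer, like the sum of finitely many ``tent'' functions — and then to exploit the interaction \emph{between} layers forced by Lemmas \ref{lemma1} and \ref{lemma2}. Concretely, I would take a sequence of radii $r_k \downarrow 0$ (say $r_k = 2^{-k}$, perturbed slightly so that no separation radius is attained: replace a single distance $r_k$ by $r_k(1 + 1/j)$ running over pairs, so that every point sees infinitely many others but the infimum is never achieved) and, inside the ``scale-$r_k$ annulus'' around a base point, place a finite metric space $N_k$ whose pairwise distances are all comparable to $r_k$. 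The key design goal for $N_k$: it should be a space where $\sna(N_k)$-type obstructions accumulate as $k\to\infty$, i.e. the finite spaces $N_k$ must be chosen so that any isometric $c_0$-sequence in $\lip(M)$ is eventually ``trapped'' with no room to strongly attain.

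The heart of the argument is a contradiction scheme. Suppose $\{f_n\}_{n=1}^\infty \subseteq \sna(M)$ is isometrically equivalent to the $c_0$-basis. Normalize so $\|f_n\|_{\Lip}=1$, and let $x_n \neq y_n$ be points where $f_n$ strongly attains. By Lemma \ref{lemma1}, for each $n$ and each $m\neq n$ we have $f_m(x_n)=f_m(y_n)$; by Lemma \ref{lemma2}, for every fixed $p\in M$, $f_n(p)\to 0$. The first observation I would extract is that the attaining pairs $(x_n,y_n)$ cannot accumulate at a single scale: since $M$ is uniformly discrete, $d(x_n,y_n)\geq \delta>0$ for all $n$, so $|f_n(x_n)-f_n(y_n)|=d(x_n,y_n)\geq\delta$, which by Lemma \ref{lemma2} forces the points $x_n,y_n$ to escape every finite portion of $M$ — hence (after passing to a subsequence via Ramsey, which is harmless by fact (iii) in the preliminaries) we may assume the pair $(x_n,y_n)$ lives in layer $N_{k(n)}$ with $k(n)\to\infty$, \emph{or} at least that the pairs are ``spread out'' through the layers. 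This is where the uniform-discreteness and boundedness are both used: boundedness guarantees infinitely many layers are needed, uniform discreteness keeps $\delta$ away from $0$.

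The main obstacle — and the place where the clever combinatorial choice of the $N_k$ must do its work — is to show that a function of norm $1$ cannot simultaneously (a) strongly attain on layer $N_{k(n)}$ at a pair at distance $\geq\delta\approx r_{k(n)}$ (so $r_{k(n)}$ is bounded below, contradicting $r_k\to 0$!) — wait, that is the crux: \emph{the pairs cannot escape to arbitrarily small scales}, because $d(x_n,y_n)\geq\delta$ forces $r_{k(n)}$ bounded below, so only finitely many layers can host attaining pairs. Then all pairs $(x_n,y_n)$ lie in a \emph{fixed finite} union of layers, hence in a finite set $F\subseteq M$; but by Lemma \ref{lemma2} applied to each of the finitely many points of $F$, $f_n\to 0$ pointwise on $F$, so $|f_n(x_n)-f_n(y_n)|\to 0$, contradicting $\geq\delta$. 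So the actual engineering task reduces to arranging that $M$ is \emph{bounded} and \emph{uniformly discrete} while \emph{no point attains its separation radius} and \emph{$M$ is infinite and complete} — and then verifying that strong norm attainment really does pin an attaining pair to a single bounded-below scale, which is automatic from uniform discreteness. Thus I would: (1) construct $M = \{0\}\cup\bigcup_k N_k$ with $\diam M\leq 1$, all distances in $(\delta_0, 1]$ for some $\delta_0>0$, arranged (via the perturbation trick of attaching to each point a strictly decreasing sequence of ``witness distances'' with infimum $\delta_0$ not attained) so that $R(x)=\delta_0$ for all $x$ but no $x$ attains it; (2) check completeness (the only possible limit points are already in $M$, or the perturbation is done so that Cauchy sequences are eventually constant); (3) run the contradiction above. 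The one genuinely delicate point is reconciling ``$M$ uniformly discrete'' with ``no point attains its separation radius'' — a space where every point has a strictly decreasing sequence of distances to other points converging to the common infimum $\delta_0$ — and confirming this space can still be complete and countable; I expect this to be handled by an explicit construction (e.g. vertices indexed by finite strings, with distances defined by a formula that is bounded below by $\delta_0$ and tends to $\delta_0$ along each branch).
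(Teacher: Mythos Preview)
Your argument contains a genuine gap at the pivotal step. You want to conclude that the attaining pairs $(x_n,y_n)$ are eventually confined to a \emph{finite} subset $F\subseteq M$, and then finish via Lemma~\ref{lemma2} (pointwise convergence to $0$ on $F$ contradicts $|f_n(x_n)-f_n(y_n)|\geq\delta$). But your justification for this confinement is circular: it relies on a layered structure with scales $r_k\to 0$, so that uniform discreteness forces $r_{k(n)}$ to stay bounded below and hence only finitely many layers are available. The moment you impose that \emph{all} distances lie in $(\delta_0,1]$ --- as you must, for uniform discreteness --- there are no small scales to begin with, and nothing stops the pairs $(x_n,y_n)$ from ranging over infinitely many points of $M$. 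Indeed, if your argument worked as written it would show that \emph{every} infinite bounded uniformly discrete complete metric space fails to contain an isometric $c_0$ inside $\sna$; but the paper notes (right after this theorem) that a disjoint union of copies of the counterexample, placed at mutual distance $3$, is such a space in which $c_0$ \emph{does} embed isometrically in $\sna$ via Lemma~\ref{prop:tent-c02}. So the confinement-to-a-finite-set mechanism cannot be made to work in this generality.

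The paper's proof is quite different in spirit. The space is simply $M=\{p_n\}_{n\in\N}$ with $d(p_n,p_m)=1+\tfrac{1}{\max\{n,m\}}$; all distances lie in $(1,3/2]$, the separation radius of every point is $1$ and is never attained. Given a hypothetical isometric $c_0$-sequence $\{f_n\}\subseteq\sna(M)$ with attaining pairs $(x_n,y_n)$, one applies Ramsey's theorem to the overlap pattern of the pairs (disjoint, share the first coordinate, share the second, etc.) and, in each resulting case, explicitly exhibits indices $n_0\neq m_0$ and a sign $\delta\in\{\pm 1\}$ so that $\|f_{n_0}+\delta f_{m_0}\|_{\Lip}>1$, using Lemmas~\ref{lemma1}, \ref{lemma2}, and \ref{fact} together with the precise form of the metric (the $\tfrac{1}{\max\{n,m\}}$ perturbation is exactly what creates the room for the strict inequality). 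The contradiction is obtained pair-by-pair, not by a global pointwise-convergence argument on a finite set.
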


\begin{proof} 
Let $M=\{p_n\}_{n\in\mathbb{N}}$ be any countable set endowed with the metric $d$ given by 
$$d(p_n,p_m)=
\begin{cases}
1+\frac{1}{\max\{m,n\}}\;\;\;&\text{ if }m\neq n,\\
0&\text{ otherwise.}
\end{cases}$$ 
Note that the diameter of $M$ is $3/2$.

For the sake of contradiction, let us suppose that there exists a sequence $\{f_n\}_{n\in\N}$ of strongly norm-attaining functions which is isometrically equivalent to the canonical basis of $c_0$. For every $n \in \N$, let $x_n, y_n \in M$ be such that $x_n \not= y_n$ and $\big|f_n(x_n) - f_n(y_n)\big| = d(x_n, y_n)$. Our goal is to find two natural numbers $n_0 \not= m_0$ and $\delta \in \{-1,1\}$ such that the Lipschitz function $f_{n_0}+\delta f_{m_0}$ has Lipschitz norm strictly greater than 1. This will lead to a contradiction.

Let us consider the sets
\begin{align*}
A &:= \left\{\{n,m\} \in \N^{[2]}: \{x_n, y_n\} \cap \{x_m, y_m\} = \emptyset\right\},\\
B_1 &:= \left\{\{n,m\} \in \N^{[2]}: x_n = x_m\right\},\\
B_2 &:=\left\{\{n,m\} \in \N^{[2]}: y_n = y_m\right\}\text{, and}\\
B_3 &:=\left\{\{n,m\} \in \N^{[2]}: x_n = y_m\text{ or }x_m=y_n\right\}.
\end{align*}

By Ramsey's theorem, there exists $C \in \{A, B_1, B_2, B_3\}$ and an infinite set $S \subseteq \N$ such that $S^{[2]} \subseteq C$. 

\vspace{0.2cm} 
\noindent
\textit{Case 1}: $C = A$. 
\vspace{0.2cm} 

We may assume by passing to a subsequence that $\{x_n, y_n\} \cap \{x_m, y_m\} = \emptyset$ for every $n, m \in \N$ with $n\not=m$. For each $n \in \N$, let us set 
\begin{equation*}
    \e_n := \frac{1}{2 k(n)}, \ \ \mbox{where} \ \ k(n) := \max \{ k \in \N: p_k = x_n \ \ \mbox{or} \ \ p_k = y_n \}.
\end{equation*}
Let us fix $n_0 \in \N$. Since $\{x_n, y_n\} \cap \{x_m, y_m\} = \emptyset$ for every $n, m \in \N$ with $n\not=m$, by Lemma \ref{lemma2} and the definition of the metric $d$, there exists $m_0 \in \N \setminus \{n_0\}$ such that
\begin{itemize}
\item[$(i)$] $\displaystyle \max\{|f_{m_0}(x_{n_0})|,|f_{m_0}(y_{n_0})|\} \leq \frac{\e_{n_0}}{3}$ and
\item [$(ii)$] $\displaystyle \max\{d(x_{n_0}, x_{m_0}), d(x_{n_0}, y_{m_0}), d(y_{n_0}, x_{m_0}), d(y_{n_0}, y_{m_0})\} \leq 1 + \frac{\e_{n_0}}{3}$. 
\end{itemize}

Now, by Lemma \ref{lemma1}, there is a constant $C_{m_0} \in \R$ such that $f_{n_0}(x_{m_0}) = f_{n_0}(y_{m_0})= C_{m_0}$. By relabeling the pairs $(x_{n_0},y_{n_0})$ and $(x_{m_0},y_{m_0})$ if necessary, we may assume that 
$$ |f_{n_0}(x_{n_0})-C_{m_0}|\geq |f_{n_0}(y_{n_0})-C_{m_0}|\qquad\text{and}\qquad|f_{m_0}(x_{m_0})|\geq |f_{m_0}(y_{m_0})|.$$ 

With this assumption, Lemma \ref{fact} yields that 
\begin{equation} \label{ineq3}
|f_{m_0}(x_{m_0})| \geq \frac{1}{2} + \e_{m_0} \qquad\text{and}\qquad |f_{n_0}(x_{n_0})-C_{m_0}| \geq \frac{1}{2} + \e_{n_0}.
\end{equation}
In particular, $f_{m_0}(x_{m_0}) \not= 0$. Set now $\delta := \frac{|f_{m_0}(x_{m_0})|}{f_{m_0}(x_{m_0})}\in\{-1,1\}$. To finish the proof of this case, we distinguish two possibilities according to the sign of $|f_{n_0}(x_{n_0})-C_{m_0}|$:

If $f_{n_0}(x_{n_0}) < C_{m_0}$, consider the function $f=f_{n_0}+\delta f_{m_0}$, which is $1$-Lipschitz by assumption. However, using properties $(i)$ and $(ii)$ and equation \eqref{ineq3} we obtain that
\begin{align*}
|f(x_{n_0}) - f(x_{m_0})| &\geq - f_{n_0}(x_{n_0}) - \delta f_{m_0}(x_{n_0}) + f_{n_0}(x_{m_0}) + \delta f_{m_0}(x_{m_0}) \\
&> \frac{1}{2} + \e_{n_0} + \frac{1}{2} - |f_{m_0}(x_{n_0})|>d(x_{n_0}, x_{m_0}), 
\end{align*} 
a contradiction.

On the other hand, if $f_{n_0}(x_{n_0}) \geq C_{m_0}$, an analogous procedure shows that the function $g=f_{n_0}-\delta f_{m_0}$ has a Lipschitz norm greater than $1$ witnessed by the same pair $(x_{n_0},x_{m_0})$. This again yields a contradiction.

\vspace{0.2cm} 
\noindent
\textit{Case 2}: $C \in\{ B_1,B_2,B_3\}$. 
\vspace{0.2cm} 

We will prove it for $C=B_1$, since the two remaining possibilites can be reduced to this one. Indeed, it is straightforward to check that, if $C=B_2$, by relabelling the pairs $(x_n,y_n)$, for $n\in\N$, we may assume that $C=B_1$. Else, if Ramsey applied to $B_3$, fix $n_0\in S$. For each $m\in S\setminus\{n_0\}$, we have that either $x_{n_0}=y_m$ or $x_m=y_{n_0}$, so one of the sets $S_1=\{m\in S\setminus\{n_0\}\colon x_m=y_{n_0}\}$ or $S_2=\{m\in S\setminus\{n_0\}\colon y_m=x_{n_0}\}$ is infinite, and we have now reduced this situation to the case where Ramsey applies to $B_1$ or $B_2$, respectively.

Hence, by taking subsequences if necessary, we assume that there exists $k^*\in \N$ such that $x_n = p_{k*}$ for every $n \in \N$. By Lemma \ref{lemma1}, we have that $y_n \not= y_m$ for every $n, m \in \N$ with $n \not= m$. Using Lemma \ref{lemma2}, we may find $n_0, m_0 \in \N$ with $n_0 \not= m_0$ such that 
\begin{equation} \label{ineq4} 
|f_{n_0}(x)| \leq \frac{1}{10} \qquad\text{and}\qquad |f_{m_0}(x)| \leq \frac{1}{10}. 
\end{equation}
Hence, applying now Lemma \ref{fact}, we have that 
\begin{equation} \label{ineq5}
|f_{n_0}(y_{n_0})| \geq \frac{9}{10} \qquad\text{and}\qquad |f_{m_0}(y_{m_0})| \geq \frac{9}{10}. 
\end{equation}
Changing signs of $f_{n_0}$ and $f_{m_0}$ if necessary, we may assume that $f_{n_0}(y_{n_0}) > 0$ and $f_{m_0}(y_{m_0}) > 0$. Finally, consider the function $f:=f_{n_0}-f_{m_0}$, which is $1$-Lipschitz by the assumption on the sequence $\{f_n\}_{n\in\N}$. However, applying \eqref{ineq4} and \eqref{ineq5}, and recalling that the diameter of $M$ is $3/2$ we obtain that
\begin{align*}
|f(y_{n_0}) - f(y_{m_0})| &\geq f_{n_0}(y_{n_0}) - f_{m_0}(y_{m_0}) + f_{m_0}(y_{n_0}) - f_{n_0}(y_{m_0}) \\
&\geq \frac{9}{5} - (|f_{m_0}(y_{n_0})| + |f_{n_0}(y_{m_0})|)> d(y_{n_0}, y_{m_0}).
\end{align*} 
This is a contradiction and the proof is over.
\end{proof}

It is worth mentioning that there exist countable bounded uniformly discrete complete metric spaces $M$ with the condition that no point $x$ in $M$ attains its separation radius, but such that $c_0$ embeds isometrically in $\sna(M)$. Indeed, it suffices to consider a countable collection $\{M_n\}_{n\in\N}$ of copies of the previous space in such a way that $d(M_n, M_m)=3$ for all different $n,m\in\mathbb{N}$, and observe that, in this context, Lemma \ref{prop:tent-c02} applies. This means that the aforementioned property is not sufficient for $c_0$ not to be contained in $\sna(M)$ isometrically.

Likewise, one could be tempted to assume that the condition of not attaining the separation radii is at least necessary in negative results as Theorem \ref{main-theorem}. However, this is far from being true as well. In fact, later in this section we will exhibit a proper but not bounded uniformly discrete complete metric space $M$ such that $c_0$ cannot be embedded in $\sna(M)$ isometrically (see Theorem \ref{ContraejemploProper}). In particular, every point of $M$ attains its separation radius, since closed bounded sets in $M$ are compact. On the other hand, we will see in the next subsection that the property of being uniformly discrete is indeed necessary in order to get such negative results (see Theorem \ref{caso-compacto}).

\subsection{Non uniformly discrete metric spaces}

Let us move on to the main positive result of the paper. Going in the opposite direction of Theorem \ref{main-theorem}, here we show that we can \textit{always} embed $c_0$ isometrically in $\sna(M)$ whenever $M$ is infinite but not uniformly discrete.

\begin{theorem} \label{caso-compacto} Let $M$ be an infinite non uniformly discrete metric space. Then, the set $\sna(M)$ contains an isometric copy of $c_0$. 
\end{theorem}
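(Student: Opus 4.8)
The plan is to exploit the failure of uniform discreteness to produce, inside $M$, a configuration to which Lemma \ref{prop:tent-c02} applies with $\Gamma = \N$. Since $M$ is not uniformly discrete, we have $\inf\{R(x) : x \in M\} = 0$, so there is a sequence of points $z_k \in M$ with $R(z_k) \to 0$; that is, for each $k$ there is a point $w_k \in M \setminus \{z_k\}$ with $d(z_k, w_k)$ arbitrarily small. The naive hope would be to take $x_k := z_k$ and $y_k := w_k$ directly, but the separation condition $d(x_\alpha, x_\beta) \geq d(x_\alpha, y_\alpha) + d(x_\beta, y_\beta)$ of Lemma \ref{prop:tent-c02} need not hold, because the centers $z_k$ could cluster together. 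So first I would extract a well-separated subsequence.

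The key step is a diagonal/greedy selection. Passing to a subsequence, I may assume the radii $\delta_k := d(z_k, w_k)$ satisfy $\delta_k \to 0$ very fast, say $\sum_k \delta_k < \infty$ or even $\delta_{k+1} < \delta_k / 4$ and $\delta_1$ as small as we like. Now I build the desired subsequence inductively. Having chosen $z_{k_1}, \dots, z_{k_j}$ which are pairwise at distance bounded below by some $r > 0$, I need to find $z_{k_{j+1}}$ far from all of them. Here there are two cases depending on the structure of $M$: either $M$ has a cluster point, in which case one can find infinitely many $z_k$ inside a tiny ball and the required separation fails — but then one can instead pick the $z_k$ to \emph{converge}, and handle the configuration differently — or one genuinely has room to spread the centers out. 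I expect the cleanest route is to split on whether $M$ is separable / has an accumulation point of the $z_k$'s. If some subsequence $z_{k_j} \to p$, then after passing to a further subsequence I can arrange $d(z_{k_i}, z_{k_j})$ is comparable to $\max(\delta_{k_i}', \delta_{k_j}')$ for a suitable choice, and rescale; if the $z_k$ are uniformly separated (distance $\geq r > 0$ pairwise along a subsequence) then since $\delta_k \to 0$ I can discard finitely many and get $d(z_{k_i}, z_{k_j}) \geq r > 2\delta_{k_i} \geq \delta_{k_i} + \delta_{k_j}$ for all large $i < j$, which is exactly the hypothesis of Lemma \ref{prop:tent-c02}.

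Concretely, here is the selection I would carry out in the separated case. Fix a sequence $z_k$ with $R(z_k) \to 0$ and companions $w_k$ with $d(z_k,w_k) = \delta_k \to 0$. By Ramsey's theorem (or a direct pigeonhole on the pairs $\{i,j\} \in \N^{[2]}$, classifying by whether $d(z_i,z_j)$ exceeds a fixed threshold) applied together with the fact that we may always thin to make $\delta_k$ arbitrarily small compared to any fixed positive number: either there is an infinite set on which the centers are pairwise $\geq r$ for some $r>0$, or the centers have a norm-Cauchy subsequence. In the first subcase, thin further so that $\delta_k < r/2$ for all $k$ in the infinite set; then for $i \neq j$ in that set, $d(z_i, z_j) \geq r > \delta_i + \delta_j = d(z_i,w_i) + d(z_j,w_j)$, and Lemma \ref{prop:tent-c02} with $x_k = z_k$, $y_k = w_k$ gives an isometric $c_0$ inside $\sna(M)$. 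In the second subcase, where $z_{k_j} \to p$, I would instead re-run the choice of companions: replace $w_{k_j}$ by a point even closer to $z_{k_j}$, so that $\delta_{k_j}' := d(z_{k_j}, w_{k_j}')$ decays faster than the \emph{gaps} $d(z_{k_j}, z_{k_{j+1}})$ shrink, i.e. $2\delta_{k_j}' \leq \min_{i>j} d(z_{k_j}, z_{k_i})$, which is possible by telescoping since $R(z_{k_j}) = 0$ is not assumed but $R(z_{k_j})$ is small — one picks $w_{k_j}'$ with $d(z_{k_j}, w_{k_j}') < \tfrac12 \inf_{i \neq j} d(z_{k_i}, z_{k_j})$ after first passing to a subsequence making the right-hand side positive (which it is, as the $z$'s are distinct and we can keep them distinct). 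Then again $d(z_{k_i}, z_{k_j}) \geq \delta_{k_i}' + \delta_{k_j}'$, and Lemma \ref{prop:tent-c02} finishes.

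\textbf{Main obstacle.} The delicate point is the second subcase: when the natural "small-radius" centers $z_k$ clump together, the witnessing pairs $(x_k, y_k)$ have centers too close for the separation inequality. The fix — shrinking the companions $w_k$ to be much closer to $z_k$ than the inter-center gaps — works only after one secures that $\inf_{i \neq j} d(z_{k_i}, z_{k_j}) > 0$ along a subsequence, or more carefully that the gaps $d(z_{k_j}, z_{k_{j+1}})$ are summable-comparable and dominate the chosen $\delta'$'s; making this quantitative and checking it covers \emph{every} non-uniformly-discrete $M$ (including non-separable ones, where one argues with nets or just notes that a single non-isolated-type sequence suffices) is where the real work lies. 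Everything else is bookkeeping plus an appeal to Lemma \ref{prop:tent-c02}.
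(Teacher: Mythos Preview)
Your dichotomy (uniformly separated infinite subset vs.\ Cauchy subsequence) is fine, and the separated subcase is handled correctly. The genuine gap is in the clustered subcase: your ``fix'' of shrinking the companions so that $d(z_{k_j},w'_{k_j}) < \tfrac12 \inf_{i\neq j} d(z_{k_i},z_{k_j})$ presupposes that such a nearby point exists, i.e.\ that $R(z_{k_j})$ is small compared with the inter-center gaps. Nothing in the hypotheses forces this, and in fact it can fail for \emph{every} choice of centers and companions. Take $M=\{0\}\cup\{e_n/n: n\in\N\}\subset \ell_2$ with $(e_n)$ orthonormal. Then $M'=\{0\}$, $R(e_n/n)=d(e_n/n,0)=1/n$, and for all $n\neq m$,
\[
d\bigl(e_n/n,\,e_m/m\bigr)=\sqrt{1/n^{2}+1/m^{2}}<1/n+1/m=R(e_n/n)+R(e_m/m).
\]
Since any companion $y$ of a center $x=e_n/n$ satisfies $d(x,y)\geq R(x)$, the separation condition of Lemma~\ref{prop:tent-c02} fails for \emph{every} pair of distinct centers chosen among the $e_n/n$, on \emph{every} subsequence, with \emph{any} companions. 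Using $0$ as one center does not help: you still need infinitely many others. So your route through Lemma~\ref{prop:tent-c02} is blocked for this $M$, even though $M$ is infinite and not uniformly discrete.

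The paper's proof takes a different split that anticipates exactly this obstruction. After first reducing (via \cite[Theorems~3.2 and~3.4]{AMRTpre}) to the case where $M'$ is nonempty and finite, it picks $x_n\to 0\in M'$ with $R(x_n)>0$ and applies Ramsey to the partition $d(x_n,x_m)\geq R(x_n)+R(x_m)$ versus $d(x_n,x_m)<R(x_n)+R(x_m)$. The first alternative does lead to Lemma~\ref{prop:tent-c02}, with some extra care when the $R(x_n)$ are not attained. The second alternative---precisely the situation of the example above---is handled by an explicit construction that does \emph{not} go through Lemma~\ref{prop:tent-c02}: one selects pairs $a_k,b_k$ among the $x_n$'s with $\varepsilon_k:=R(a_k)+R(b_k)-d(a_k,b_k)>0$ and with $R(a_k),R(b_k)$ decaying suitably, and sets $f_k(a_k)=R(a_k)-\varepsilon_k/2$, $f_k(b_k)=-R(b_k)+\varepsilon_k/2$, and $f_k\equiv 0$ elsewhere. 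One then checks directly that $\sum_k\lambda_k f_k\in\sna(M)$ with norm $\max_k|\lambda_k|$, attained at $(a_{k_0},b_{k_0})$. This direct construction is the missing idea in your proposal; Lemma~\ref{prop:tent-c02} alone is not enough.
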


\begin{proof}
By \cite[Theorems 3.2 and 3.4]{AMRTpre} it suffices to assume that $M'$ is non-empty and finite. We also assume without loss of generality that $0\in M'$. Now we can find a sequence $\{x_n\}_{n\in \N}$ in $M$ converging to $0$ such that $R(x_n)>0$ for all $n\in N$. It is clear that the sequence $\{R(x_n)\}_{n\in \N}$ converges to $0$.

We are going to define a sequence $\{f_k\}_{k\in\N}$ of $1$-Lipschitz functions in $\text{SNA}(M)$ which will be isometrically equivalent to the canonical basis of $c_0$ and such that the subspace $\overline{\spn} \{ f_k: k \in \N\}$ (which is isometric to $c_0$) is contained in $\sna(M)$. 

We define the following sets:
\begin{align*}
A &:= \big\{ \{n,m\} \in \N^{[2]}: d(x_n, x_m) \geq R(x_n)+R(x_m) \big\},\\
B &:= \big\{ \{n,m\} \in \N^{[2]}: d(x_n, x_m) < R(x_n)+R(x_m) \big\},
\end{align*}
which form a partition of $\mathbb{N}^{[2]}$. By Ramsey's theorem, there is $C \in \{A, B\}$ and an infinite subset $S \subseteq \N$ such that $S^{[2]} \subseteq C$. These two possibilities give us two separate cases.

\vspace{0.2cm}
\noindent
{\it Case 1}: $C=A$.
\vspace{0.2cm}

Consider the subset $\{x_n\}_{n\in S}$, which satisfies that $d(x_n,x_m)\geq R(x_n)+R(x_m)$ for all $n\neq m\in S$. Assume first that there is an infinite subset of $S$, which we denote by $S$ again, such that $R(x_n)$ is attained for every $n\in S$. Consider now for each $n\in S$ an element $y_n\in M$ such that $d(x_n,y_n)=R(x_n)$. It is straightforward to see that the sequences $\{x_n\}_{n\in S},\{y_n\}_{n\in S}$ satisfy the assumptions of Lemma \ref{prop:tent-c02} and we are done.

Otherwise, we may assume that $R(x_n)$ is not attained for any $n\in S$. Let us then choose inductively a sequence $\{a_k\}_{k\in\N}$ among the elements of the sequence $\{x_n\}_{n\in S}$ satisfying that for every $k\in\N$,
\begin{equation}\label{eqtag1}d(a_k,0)\leq \frac{d(a_j,0)-R(a_j)}{4}\quad \forall j<k.\end{equation}
For the sake of clarity, let us denote $\Delta_k=\frac{d(a_k,0)-R(a_k)}{4}$ for every $k\in\N$. It is clear from \eqref{eqtag1} that $\{\Delta_k\}_{k=1}^\infty$ is a decreasing sequence. Now,
from the fact that $R(a_k)$ is not attained, we deduce that for every $k\in\N$, there is $b_k\in B(a_k,R(a_k)+\Delta_k)$.
Finally, let us prove that the sequences $\{a_k\},\{b_k\}$ are under the assumptions of Lemma \ref{prop:tent-c02}. Pick $n,m\in\N$ with $n<m$. Clearly, the following expressions hold 
\begin{equation}\label{eqtag2}
\begin{aligned}
    d(a_n,0)=&R(a_n)+4\Delta_n,\quad d(a_m,0)\leq\Delta_n,\quad d(a_n,b_n)\leq R(a_n)+\Delta_n\\
    &d(a_m,b_m)\leq R(a_m)+\Delta_m\leq d(a_m,0)+\Delta_m\leq 2\Delta_n.
    \end{aligned}
\end{equation}

Hence, by \eqref{eqtag2} we have that
$$ d(a_n,a_m)\geq d(a_n,0)-d(a_m,0)\geq R(a_n)+3\Delta_n\ge d(a_n,b_n)+d(a_m,b_m). $$
This finishes the first case.

\vspace{0.3cm} 
\noindent
 {\it Case 2}: $C = B$.
\vspace{0.3cm}

Since the set $S$ is infinite and the sequence $\{x_n\}_{n\in \N}$ is convergent, we can inductively define a pair of sequences $\{a_k\}_{k\in\N}\subseteq \{x_n\}_{n\in S}$ and $\{b_k\}_{k\in\N}\subseteq \{x_n\}_{n\in S}$ satisfying the following properties:

\vspace{0.2cm} 
\begin{itemize}
\item[$(i)$] $R(a_k)< \varepsilon_j/2$, for $j,k\in\N$ with $j<k$, where $\varepsilon_j= R(a_j)+R(b_j)-d(a_j,b_j)>0$.
\item[$(ii)$] $R(b_k)<R(a_k)/2$ for every $k\in\mathbb{N}$.
\end{itemize}
\vspace{0.2cm} 

Fixed $k \in \N$, we define $f_k: M \rightarrow \R$ by 
\begin{equation*}
f_k(p) =
\begin{cases}
\displaystyle R(a_k) - \frac{\e_k}{2} & \mbox{if} \ p = a_k, \\
\displaystyle - R(b_k) + \frac{\e_k}{2} & \mbox{if} \ p=b_k, \\
0 & \mbox{otherwise}.
\end{cases}
\end{equation*} 

Property $(i)$ and the definition of $\varepsilon_k$ ensure that $f_k(a_k)\geq 0$ and $f_k(b_k)\leq 0$ for every $k\in\mathbb{N}$. With this, we obtain that
\begin{equation} \label{eq6}
|f_k(a_k)| = R(a_k)- \frac{\e_k}{2},\qquad\text{and}\qquad|f_k(b_k)| = R(b_k)- \frac{\e_k}{2},\qquad\text{for all }k\in \N.
\end{equation}

Let $\{\lambda_k\}_{k \in \N} \in c_0$. Again we will show that $f:= \sum_{k \in \N} \lambda_k f_k \in \sna(M)$ and also that $\|f\|_{\Lip} = \max_{k \in \N} \{ |\lambda_k| \}$. Choose $k_0\in\mathbb{N}$ such that $|\lambda_{k_0}|=\max_{k\in \N}\{|\lambda_k\}$. 

We again start by proving that $f$ is $|\lambda_{k_0}|$-Lipschitz. Take $p, q \in M$ with $p\not=q$. We show that $|f(p)-f(q)|\leq |\lambda_{k_0}|d(p,q)$. If both $p$ and $q$ form a pair $\{a_k,b_k\}$ for some $k\in\mathbb{N}$, the previous inequality is clear. We need to study now the two remaining possibilities:

\vspace{0.3cm} 

\begin{itemize}
    
\item[(a)] Suppose that there exist $k_1, k_2 \in \N$ with $k_1<k_2$ such that $p \in \{ a_{k_1},b_{k_1}\}$ and $q \in \{ a_{k_2}, b_{k_2}\}$. Then, by \eqref{eq6} we have in particular that $|f(p)|=|\lambda_{k_1}|\left( R(p)-\frac{\e_{k_1}}{2}\right)$ and $|f(q)|< |\lambda_{k_2}|R(a_{k_2})$. Hence, we obtain that
\begin{align*}
|f(p) - f(q)| <& |\lambda_{k_0}| \cdot \left( R(p) - \frac{\e_{k_1}}{2} + R(a_{k_2}) \right) \\
\leq & |\lambda_{k_0}| \cdot R(p)\leq |\lambda_{k_0}|\cdot d(p,q).
\end{align*}

\item[(b)] If $p \in M \setminus \{x_k\}_{k \in \N}$, then $f(p) = 0$ and, using (\ref{eq6}) again, we have that 
    \begin{equation*}
    |f(p) - f(q)| = |f(q)| < |\lambda_{k_0}| \cdot R(q) \leq | \lambda_{k_0}| \cdot d(p,q).
    \end{equation*}
\end{itemize}

We have proven then that the Lipschitz norm of $f$ is smaller or equal than $|\lambda_{k_0}|$. Finally, considering the pair of points $a_{k_0}$ and $b_{k_0}$, we quickly observe that $\|f\|_{\Lip}=|\lambda_{k_0}|$ and that $f$ strongly attains its Lipschitz norm at this pair of points. This finishes the proof.
\end{proof}

In \cite[Theorem 3.3]{AMRTpre}, $c_0$ is isomorphically embedded into $\sna(M)$ for countable compact metric spaces $M$ in a non constructive way. Indeed, the authors show that the little Lipschitz space is an infinite-dimensional subspace of $c_0$ contained in $\sna(M)$. The following corollary is an immediate consequence of Theorem \ref{caso-compacto} and improves the aforementioned result from \cite{AMRTpre} by means of a different approach.

\begin{corollary}\label{realcompactcase}
Let $M$ be an infinite compact metric space. Then, the subset $\sna(M)$ contains an isometric copy of $c_0$.
\end{corollary}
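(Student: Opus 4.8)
The plan is to read this off Theorem \ref{caso-compacto} with essentially no extra work: that theorem already yields an isometric copy of $c_0$ inside $\sna(M)$ for every infinite metric space which is not uniformly discrete, so the only thing I need to verify is that an infinite compact metric space $M$ is never uniformly discrete.

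To do this I would argue by contradiction. Suppose $\delta := \inf\{R(x) : x \in M\} > 0$. Since the infimum defining $R(x)$ is a lower bound, for every $x \in M$ and every $y \in M \setminus \{x\}$ we have $d(x,y) \geq R(x) \geq \delta$. Thus every point of $M$ is isolated, i.e.\ $M$ carries the discrete topology; but an infinite discrete space is not compact, since the cover of $M$ by its singletons admits no finite subcover. This contradicts the compactness of $M$. Hence $M$ is infinite and not uniformly discrete, and Theorem \ref{caso-compacto} applies, producing the desired isometric copy of $c_0$ in $\sna(M)$.

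I do not expect any real obstacle: the statement is genuinely immediate once Theorem \ref{caso-compacto} is in hand, and the only point worth isolating is the elementary fact, used above, that a uniformly discrete metric space which is compact must be finite. (Equivalently, one could invoke sequential compactness: an infinite compact metric space has a cluster point $x$, which forces $R(x) = 0$ and hence $\inf_{x \in M} R(x) = 0$, so $M'$ is nonempty and $M$ fails to be uniformly discrete.)
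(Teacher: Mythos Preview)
Your proposal is correct and matches the paper's own treatment: the corollary is stated there as an immediate consequence of Theorem \ref{caso-compacto}, with no further argument given. Your explicit verification that an infinite compact metric space cannot be uniformly discrete is precisely the (trivial) missing step, and nothing more is needed.
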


\subsection{A proper and uniformly discrete counterexample}

To finish this section, we show that Corollary \ref{realcompactcase} cannot be improved to include proper metric spaces. Indeed, we have the following result. 

\begin{theorem}\label{ContraejemploProper}
There exists an infinite proper uniformly discrete complete metric space $M$ such that $c_0$ is not isometrically contained in $\sna(M)$ and for which every point in $M$ attains its separation radius.
\end{theorem}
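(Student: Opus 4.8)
The plan is to build $M$ as a countable discrete set of isolated points arranged so that it is proper (every bounded subset is finite, hence compact), uniformly discrete, and such that the distances between far-away points are ``almost additive from the origin'' in a way that forces the kind of contradiction exploited in Theorem \ref{main-theorem}. Concretely, I would take $M=\{p_0=0\}\cup\{p_n\}_{n\in\mathbb{N}}$ and choose a metric of the form $d(p_n,p_m)=a_n+a_m$ for $n\neq m$ (both nonzero), where $\{a_n\}$ is a strictly increasing sequence with $a_n\to\infty$ and $a_0=0$; one checks immediately that the triangle inequality holds (it is the graph/tree-type metric on a ``star''), that $R(p_n)=a_n+a_1$ is attained (at $p_1$, or at $p_0$ if we set $a_0$ suitably), that $M$ is uniformly discrete since $R(p_n)\geq a_1>0$, and that $M$ is proper because a ball $B(0,r)$ contains only finitely many $p_n$ (those with $a_n\leq r$). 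So the structural requirements are automatic; the work is the non-embedding statement.

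For the non-embedding part I would argue by contradiction exactly as in Theorem \ref{main-theorem}: suppose $\{f_n\}_{n\in\mathbb{N}}\subseteq\sna(M)$ is isometrically equivalent to the $c_0$-basis, pick pairs $x_n\neq y_n$ witnessing strong norm attainment, and run Ramsey on $\mathbb{N}^{[2]}$ with the same partition $\{A,B_1,B_2,B_3\}$ (disjoint witnesses; shared $x$'s; shared $y$'s; crossed). In each case I would use Lemma \ref{lemma1} (the $f_m$ are constant on $\{x_n,y_n\}$ for $m\neq n$), Lemma \ref{lemma2} ($f_n(p)\to 0$ pointwise), and Lemma \ref{fact} (if $|f(x)-f(y)|=d(x,y)$ then $|f(x)-C|+|f(y)-C|\geq d(x,y)$) to extract, for a suitable pair $n_0\neq m_0$ and sign $\delta\in\{-1,1\}$, a violation of $1$-Lipschitzness of $f_{n_0}+\delta f_{m_0}$ at a pair of points lying ``deep'' in $M$. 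The crucial new point, exploiting properness/unboundedness, is this: in the bounded example the additive term $1+\tfrac1{\max\{m,n\}}$ was bounded above by the diameter $3/2$, which is exactly what let the ``mass $\approx 1$'' of two bumps overflow it; here I instead want to choose the witnesses so that the relevant distance $d$ is \emph{small compared to the Lipschitz mass available}. Since $f_n$ is $1$-Lipschitz and $M$ is unbounded, $|f_n(p)|$ can be as large as $\sim a_{k(p)}$; a $1$-Lipschitz function strongly attaining its norm at $x_n,y_n$ must have $|f_n(x_n)-f_n(y_n)|=d(x_n,y_n)=a_{k(x_n)}+a_{k(y_n)}$, so at least one of $|f_n(x_n)|,|f_n(y_n)|$ is $\gtrsim \tfrac12(a_{k(x_n)}+a_{k(y_n)})$ up to the constant shift controlled by Lemma \ref{fact} and Lemma \ref{lemma2}. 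Combining two such functions and evaluating $f_{n_0}+\delta f_{m_0}$ across the pair $(x_{n_0},x_{m_0})$ (or whichever pair the Ramsey case dictates), one gets $|f(x_{n_0})-f(x_{m_0})|$ of size roughly $a_{k(x_{n_0})}+a_{k(x_{m_0})}$ plus a genuine \emph{positive} surplus coming from the $+a_n$-type terms, while $d(x_{n_0},x_{m_0})=a_{k(x_{n_0})}+a_{k(x_{m_0})}$ exactly, contradicting $1$-Lipschitzness — provided the sequence $\{a_n\}$ is chosen to grow fast enough that the error terms from Lemma \ref{lemma2} (which can be made $\leq \varepsilon$ for any prescribed $\varepsilon$ by passing to a far enough index) are swamped by the surplus.

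The main obstacle, and where I expect to spend the real effort, is making the ``surplus'' genuinely positive and uniform across the Ramsey cases. In Theorem \ref{main-theorem} the surplus came for free from the gap $3/2 - 1 = 1/2$; here the metric is \emph{exactly} additive, $d(p_n,p_m)=a_n+a_m$, so superficially there is no slack, and one must instead manufacture slack from the interaction of two bumps that live on \emph{overlapping} index sets after Ramsey (cases $B_1,B_2,B_3$, where some $x$'s or $y$'s coincide) versus disjoint ones (case $A$). The resolution I would pursue is to perturb the metric slightly — e.g. take $d(p_n,p_m)=a_n+a_m-\eta_{\max\{n,m\}}$ with $\eta_k\downarrow 0$ small enough to preserve the triangle inequality and uniform discreteness (this is the honest analogue of the $+\tfrac1{\max\{m,n\}}$ trick, now subtracted so that distances are \emph{strictly less} than additive) — so that whenever one forms $f=f_{n_0}+\delta f_{m_0}$ and lands on a far pair, the two bumps each contribute close to their full additive mass while $d$ has been shaved by $\eta$, yielding the contradiction. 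I would need to verify carefully (this is the routine-but-delicate calculation) that with $\eta_k$ decreasing fast this space is still proper, still uniformly discrete, that every point attains its separation radius, and that in every one of the four Ramsey cases the chosen sign $\delta$ and the chosen evaluation pair produce a strict violation of the Lipschitz bound; the combinatorial bookkeeping of which pair to test is the part most likely to require a case split mirroring — but slightly more intricate than — the proof of Theorem \ref{main-theorem}.
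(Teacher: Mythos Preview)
Your overall plan is exactly the paper's: the paper takes $M=\{p_k\}_{k\geq 0}$ with $d(p_k,p_j)=k+j-\varepsilon_{\max\{k,j\}}$ for $k,j\geq 1$ and $d(p_k,0)=k$, then argues by contradiction via the same Ramsey partition $\{A,B_1,B_2,B_3\}$ and Lemmas \ref{lemma1}--\ref{fact}, producing in each surviving case a combination $f_{n_0}\pm f_{m_0}$ that violates $1$-Lipschitzness at a well-chosen pair. You are also right that the purely additive star metric has no slack (indeed $c_0$ \emph{does} embed into $\sna$ there via the obvious tent functions), so a perturbation subtracted at the larger index is necessary; and the bookkeeping you anticipate (a four-way sub-split in Case $A$, and in the $B$ cases a reduction to $B_1$ plus a split on whether the common point is $0$) is precisely what the paper carries out.

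There is, however, one genuine error: you take $\eta_k\downarrow 0$, while the paper takes $\varepsilon_k$ strictly \emph{increasing} (and bounded by $\tfrac12$), and the direction matters. In the disjoint-witness case, after fixing $n_0$ and choosing $m_0$ with $k(m_0)>j(n_0)$, the sub-case coming from Lemma \ref{fact} where the $x$-terms carry the mass yields
\[
|f(x_{n_0})-f(x_{m_0})|>k(n_0)+k(m_0)-\varepsilon_{j(n_0)+1},
\]
and this beats $d(x_{n_0},x_{m_0})=k(n_0)+k(m_0)-\varepsilon_{k(m_0)}$ \emph{only because} $\varepsilon_{j(n_0)+1}\leq\varepsilon_{k(m_0)}$. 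With a decreasing $\eta$ this inequality reverses and that sub-case cannot be closed. Your heuristic (``the bumps contribute full additive mass while $d$ is shaved by $\eta$'') misses that the bump masses are themselves shaved: they come from $d(x_{n_0},y_{n_0})$, which already carries an $\eta_{j(n_0)}$-term. The surplus is therefore not ``$\eta$ versus nothing'' but ``$\eta_{k(m_0)}$ versus $\eta_{j(n_0)}$'', and you need the former larger --- hence $\eta$ increasing. Flip the monotonicity (take $\varepsilon_k\uparrow$, bounded) and your sketch becomes the paper's proof.
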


\begin{proof}
Let $M=\{p_k\}_{k=0}^{\infty}$, with distinguished point $p_0=0$, be a countable set endowed with the metric $d\colon M\times M\rightarrow \mathbb{R}$ given by:

$$ 
d(p_k,p_j)=
\begin{cases}
    k+j-\varepsilon_{\max\{k,j\}}\quad&\text{if }k\neq j\in \N\setminus\{0\},\\
    k&\text{if }j=0,\\
    j&\text{if }k=0,\\
    0&\text{if }j=k,
\end{cases}
$$
where $\{\varepsilon_k\}_{k\in\N}$ is a sequence of positive numbers such that $\varepsilon_{k+1}>\varepsilon_k$ and $\varepsilon_{k}<1/2$ for all $k\in\mathbb{N}$. For convenience, write $\delta_k=\varepsilon_{k+1}-\varepsilon_{k}>0$ for all $k\in\mathbb{N}$. It is clear that $M$ is proper since every bounded set is finite.

As in the proof of Theorem \ref{main-theorem}, we start by assuming that there exists a sequence $\{f_n\}_{n\in\N}$ of functions in $\sna(M)$ isometrically equivalent to the canonical basis of $c_0$, and we are going to find two natural numbers $n_0\neq m_0$ such that $f_{n_0}- f_{m_0}$ is not $1$-Lipschitz, which will yield a contradiction. For each $n\in\N$, since $f_n$ is strongly norm-attaining, we may consider two points $x_n\neq y_n$ such that $|f(x_n)-f(y_n)|=d(x_n,y_n)$. 

We write $k(n)$ and $j(n)$ to denote the natural numbers such that $x_n=p_{k(n)}$ and $y_n=p_{j(n)}$ for every $n\in \N$. By relabelling the pair $(x_n,y_n)$, we may assume that $k(n)<j(n)$ for all $n\in\mathbb{N}$.

We now define the sets $A$, $B_1$, $B_2$, and $B_3$ as in the proof of Theorem \ref{main-theorem}. By Ramsey's theorem, there exists $C\in\{A,B_1,B_2,B_3\}$ and an infinite set $S\subseteq \mathbb{N}$ such that $S^{[2]}\subseteq C$. Note, however, that the case $C=B_3$ can be reduced to $C\subseteq\{B_1,B_2\}$ as in Theorem \ref{main-theorem}, and the case $C=B_2$ cannot happen, since in that case we would forcefully get functions $f_n$ and $f_m$ from the basis that would strongly attain their norms at the same pair of points, contradicting Lemma \ref{lemma1}. Hence, the conclusion of Ramsey's theorem can only apply to sets $A$ and $B_1$ in this scenario, and thus, by passing to a subsequence if needed, this allows us to reduce the possibilities to only two cases.


\vspace{0.2cm} 
\noindent
\textit{Case 1}: For every $n\neq m$ we have $\{x_n,y_n\}\cap \{x_m,y_m\}=\emptyset$. 
\vspace{0.2cm} 

In this case, choose an arbitrary $n_0\in \N$ such that $k(n_0),j(n_0)\neq 0$. By Lemma \ref{lemma2}, and using that $\{x_n,y_n\}\cap \{x_m,y_m\}=\emptyset$ for all $n\neq m\in\N$, we can find $m_0\in \N$ with $k(m_0)>j(n_0)$ such that 
\begin{equation} \label{ineq8}
    |f_{m_0}(x_{n_0})|<\frac{1}{2}\delta_{j(n_0)}\qquad\text{ and } \qquad|f_{m_0}(y_{n_0})|<\frac{1}{2}\delta_{j(n_0)}.
\end{equation}

Using Lemma \ref{lemma1} we can define $C_{m_0}\in \R$ such that $C_{m_0}=f_{n_0}(x_{m_0})=f_{n_0}(y_{m_0})$. With Lemma \ref{fact} we obtain that either 
\begin{itemize}
    \item[($a_0$)] $|f_{n_0}(x_{n_0})-C_{m_0}|\geq k(n_0)-\frac{1}{2}\varepsilon_{j(n_0)}$, or 
    \item[($a_1$)] $|f_{n_0}(y_{n_0})-C_{m_0}|\geq j(n_0)-\frac{1}{2}\varepsilon_{j(n_0)}$.
\end{itemize}
Similarly and by the same lemma, we have that either 
\begin{itemize}
    \item[($b_0$)] $|f_{m_0}(x_{m_0})|\geq k(m_0)-\big(\frac{1}{2}\varepsilon_{j(n_0)}+\frac{1}{2}\delta_{j(n_0)}\big)$, or 
    \item[($b_1$)] $|f_{m_0}(y_{m_0})|\geq j(m_0)-\big(\varepsilon_{j(m_0)}-\frac{1}{2}\varepsilon_{j(n_0)}-\frac{1}{2}\delta_{j(n_0)}\big)$.
\end{itemize}

In total, there are now 4 different possibilities that must be checked for contradiction. We will only expand on the two possibilities where $(a_0)$ holds, since the two remaining possibilities (when $(a_1)$ holds) are proven similarly. Hence, suppose first that $(a_0)$ and $(b_0)$ hold. By changing the signs of $f_{n_0}$ and $f_{m_0}$ if necessary, we may suppose that $f_{n_0}(x_{n_0})-C_{m_0}\geq k(n_0)-\frac{1}{2}\varepsilon_{j(n_0)}$ and $f_{m_0}(x_{m_0})\geq k(m_0)-\big(\frac{1}{2}\varepsilon_{j(n_0)}+\frac{1}{2}\delta_{j(n_0)}\big)$. Consider the function $f= f_{n_0}-f_{m_0}$, which is $1$-Lipschitz since we are assuming that $\{f_n\}_{n\in\N}$ is isometrically equivalent to the canonical basis of $c_0$. However, using \eqref{ineq8}, we have that
\begin{align*}
    |f(x_{n_0})-f(x_{m_0})|&\geq f_{n_0}(x_{n_0})-f_{m_0}(x_{n_0})-C_{m_0}+f_{m_0}(x_{m_0})\\
    &> k(n_0)-\frac{1}{2}\varepsilon_{j(n_0)}-\frac{1}{2}\delta_{j(n_0)}+k(m_0)-\frac{1}{2}\varepsilon_{j(n_0)}-\frac{1}{2}\delta_{j(n_0)}\\
    &\geq k(n_0)+k(m_0)-\varepsilon_{k(m_0)}= d(x_{n_0},x_{m_0}),
\end{align*}
which yields a contradiction. Suppose now that $(a_0)$ and $(b_1)$ hold. Again we may suppose that $f_{n_0}(x_{n_0})-C_{m_0}\geq k(n_0)-\frac{1}{2}\varepsilon_{j(n_0)}$ and $f_{m_0}(y_{m_0})\geq j(m_0)-\big(\varepsilon_{j(m_0)}-\frac{1}{2}\varepsilon_{j(n_0)}-\frac{1}{2}\delta_{j(n_0)}\big)$. Using \eqref{ineq8} again, the $1$-Lipschitz function $f=f_{n_0}-f_{m_0}$ now tested at the pair $(x_{n_0},y_{m_0})$ yields
\begin{align*}
    |f(x_{n_0})-f(y_{m_0})|&> k(n_0)-\frac{1}{2}\varepsilon_{j(n_0)}-\frac{1}{2}\delta_{j(n_0)}+j(m_0)-\varepsilon_{j(m_0)}+\frac{1}{2}\varepsilon_{j(n_0)}+\frac{1}{2}\delta_{j(n_0)}\\
    &=k(n_0)+j(m_0)-\varepsilon_{j(m_0)}=d(x_{n_0},y_{m_0}),
\end{align*}
which is again a contradiction. This finishes the proof for Case 1.

\vspace{0.2cm} 
\noindent
\textit{Case 2}: $x_n = x_m$ for all $n,m\in \N$. 
\vspace{0.2cm} 
 
Write $k^*$ to denote the natural number (including 0) such that $p_{k^*}=x_n$ for all $n\in \N$. Suppose first that $k^*=0$. Then, choose any two different numbers $n_0\neq m_0\in \N$. Since both $f_{n_0}$ and $f_{m_0}$ strongly attain their norm at the pair $(0,y_{n_0})$ and $(0,y_{m_0})$ respectively, and both $f_{n_0}$ and $f_{m_0}$ vanish at $0$, we have that $|f_{n_0}(y_{n_0})|=j(n_0)$ and $|f_{m_0}(y_{m_0})|=j(m_0)$. With Lemma \ref{lemma1} we obtain that $f_{n_0}(y_{m_0})=f_{m_0}(y_{n_0})=0$. By changing the signs of both functions if needed, we may suppose that $f_{n_0}(y_{n_0})=j(n_0)$ and $f_{m_0}(y_{m_0})=j(m_0)$, producing a contradiction directly by considering the mapping $f=f_{n_0}-f_{m_0}$, which is not $1$-Lipschitz as witnessed by the pair $(y_{n_0},y_{m_0})$. Indeed,
\begin{equation*}
|f(y_{n_0}) - f(y_{m_0})| = j(n_0) + j(m_0) > d(y_{n_0}, y_{m_0}).
\end{equation*}

Suppose now that $k^*\neq 0$. Using Lemma \ref{lemma2}, choose two different natural numbers $n_0\neq m_0\in \N$ with $j(m_0)>j(n_0)>k^*$ such that 
\begin{equation*}
    |f_{n_0}(p_{k^*})|<\frac{1}{4}\qquad\text{ and } \qquad|f_{m_0}(p_{k^*})|<\frac{1}{4}.
\end{equation*}
On the one hand, this means that $|f_{n_0}(y_{n_0})|>k^*+j(n_0)-\varepsilon_{j(n_0)}-\frac{1}{4}$ and $|f_{m_0}(y_{m_0})|>k^*+j(m_0)-\varepsilon_{j(m_0)}-\frac{1}{4}$, while, on the other hand, it implies by Lemma \ref{lemma1} that 
\begin{equation*}
    |f_{n_0}(y_{m_0})|<\frac{1}{4}\qquad\text{ and } \qquad|f_{m_0}(y_{n_0})|<\frac{1}{4}.
\end{equation*}

Finally, we may again suppose without loss of generality that $f_{n_0}$ and $f_{m_0}$ are both positive at the points $y_{n_0}$ and $y_{m_0}$ respectively, and consider the function $f = f_{n_0}-f_{m_0}$, which is assumed to be $1$-Lipschitz. However, we have that
\begin{align*}
    |f(y_{n_0})-f(y_{m_0})|&\geq f_{n_0}(y_{n_0})-f_{m_0}(y_{n_0})-f_{n_0}(y_{m_0})+f_{m_0}(y_{m_0})\\
    &\geq j(n_0)+j(m_0)+2k^*-1-\varepsilon_{j(n_0)}-\varepsilon_{j(m_0)}\\
    &> j(n_0)+j(m_0)>d(y_{n_0},y_{m_0}),
\end{align*}
a contradiction. This finishes the proof of Case 2 and so the theorem is finally proven.
\end{proof}

\section{The non-separable case}

In this section we tackle the problem of embedding $c_0(\Gamma)$ in $\sna(M)$ isometrically, where $\Gamma$ is an arbitrary set of large cardinality. Let us first introduce some basic concepts and results of set theory that will be heavily used in this section.

We denote by $\dens(M)$ the \textit{density character} of a metric space $M$, defined as the smallest cardinal $\Gamma$ such that there is a dense subset of $M$ of cardinality $\Gamma$. The \textit{cofinality} $\cof(\alpha)$ of an ordinal $\alpha$ is the smallest ordinal $\beta$ such that $\alpha = \sup_{\gamma < \beta} \alpha_{\gamma}$, where $\{\alpha_{\gamma}\}_{\gamma<\beta}$ is an ordinal sequence of length $\beta$ with $\alpha_\gamma<\alpha$ for all $\gamma<\beta$. A cardinal $\Gamma$ is \textit{regular} if $\cof(\Gamma)=\Gamma$. Following the notation of \cite{Kunen}, for an ordinal $\alpha$, we denote by $\alpha^+$ the least cardinal strictly bigger than $\alpha$, which is always a regular cardinal (see \cite[Lemma 10.37]{Kunen}). We again refer the reader to \cite{Kunen} for a comprehensive background on this topic. Finally, recall that a subset of a metric space $S\subseteq M$ is called \textit{$r$-separated} for some $r>0$ whenever $d(x,y)\geq r$ for all $x\neq y\in S$.

The next result is essentially based on the proof of \cite[Proposition 3]{HN}.

\begin{proposition} \label{Petr} Let $M$ be a metric space with $\dens(M)=\Gamma > \omega_0$. Then, there exists a discrete set $L \subseteq M$ with $\card(L) = \Gamma$. Moreover, if $\cof(\Gamma) > \omega_0$, then $L$ can be chosen to be uniformly discrete. 
\end{proposition}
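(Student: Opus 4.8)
The plan is to build the set $L$ by transfinite recursion, at each stage picking a new point that stays ``far'' from the span of the finitely (or suitably) many choices that could cause trouble, and to exploit the regularity of cardinals of the form $\alpha^+$ together with a counting argument on balls. First I would reduce to the case where $\Gamma$ is regular: write $\Gamma$ as an increasing union of $\cof(\Gamma)$-many smaller cardinals $\Gamma_\xi$, note that a dense set of size $\Gamma$ must meet ``scales'' in a way that lets one of the pieces carry the full cardinality, and assemble the final set from discrete sets produced at each scale. Concretely, for each $n\in\N$ let $S_n\subseteq M$ be a maximal $1/n$-separated subset (Zorn); maximality forces $\bigcup_n S_n$ to be dense, so $\sum_n \card(S_n)\geq\Gamma$, hence $\card(S_n)=\Gamma$ for some $n$ if $\Gamma$ is regular (a sum of fewer-than-$\Gamma$ cardinals each $<\Gamma$ cannot reach a regular $\Gamma$). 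That single $S_n$ is then $1/n$-separated of cardinality $\Gamma$, which already gives the ``moreover'' part directly once we are in the regular, in fact the $\cof(\Gamma)>\omega_0$, situation — more on that below.

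For the general (possibly singular) $\Gamma$ with only a discrete — not uniformly discrete — set required, I would argue as follows. If some $S_n$ has cardinality $\Gamma$ we are done (it is even uniformly discrete). Otherwise $\card(S_n)<\Gamma$ for every $n$, so $\card\big(\bigcup_n S_n\big)=\sup_n\card(S_n)$, a countable supremum of cardinals $<\Gamma$; since this union is dense its cardinality is $\geq\Gamma$, forcing $\Gamma=\sup_n\card(S_n)$ and in particular $\cof(\Gamma)=\omega_0$. Now pick a strictly increasing sequence of indices $n_k$ with $\card(S_{n_k})$ strictly increasing and cofinal in $\Gamma$, and set $L:=\bigcup_k L_k$ where $L_k\subseteq S_{n_k}$ is chosen of cardinality $\card(S_{n_k})$ and, crucially, thrown away from a small neighbourhood of the ``lower'' part $\bigcup_{j<k}L_j$: since each $L_j$ ($j<k$) is $1/n_j$-separated, only $\card\big(\bigcup_{j<k}L_j\big)<\card(S_{n_k})$ many points of $S_{n_k}$ can lie within distance $1/(2n_{k-1})$ of it, so discarding them still leaves $\card(S_{n_k})$ points. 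The resulting $L$ has cardinality $\sup_k\card(L_k)=\Gamma$, and it is discrete: any $x\in L_k$ has the ball $B(x,r_x)$ with $r_x=\tfrac{1}{2}\min\{1/n_k,\ \dist(x,\bigcup_{j<k}L_j)\}>0$ meeting $L$ only in $\{x\}$, because points of higher levels $L_\ell$ ($\ell>k$) were by construction kept away from $L_k$, and within level $k$ the set is $1/n_k$-separated.

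For the ``moreover'' clause, when $\cof(\Gamma)>\omega_0$ the dichotomy above cannot end in the singular branch, so some $S_n$ already has cardinality $\Gamma$ and is uniformly ($1/n$-)separated; take $L=S_n$. The main obstacle I anticipate is the bookkeeping in the singular case: one must be careful that the ``avoid the previous levels'' step never kills too many points, which is exactly where the inequality $\card(\bigcup_{j<k}L_j)<\card(S_{n_k})$ (guaranteed by choosing $n_k$ so that $\card(S_{n_k})$ is strictly increasing) is used, and that the final $L$ is genuinely discrete rather than merely having each level discrete — handled by separating the levels in space, not just internally. The reference to \cite[Proposition 3]{HN} suggests the authors package this more slickly, perhaps via a single maximal separated family indexed so as to track scales simultaneously; I would be happy to adopt whatever streamlining that proof offers, but the skeleton above — maximal $\e$-nets, a regularity/cofinality case split, and a level-by-level avoidance recursion — is the content.
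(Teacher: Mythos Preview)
Your framework---maximal $1/n$-separated sets $S_n$, the cofinality dichotomy, and the conclusion that some $S_n$ has cardinality $\Gamma$ when $\cof(\Gamma)>\omega_0$---is correct and is exactly how the paper begins. The gap is in the singular case, at the counting step: the claim that only $\card\big(\bigcup_{j<k}L_j\big)$ many points of $S_{n_k}$ can lie within distance $1/(2n_{k-1})$ of $\bigcup_{j<k}L_j$ is false in general. A single ball $B\big(p,1/(2n_{k-1})\big)$ around one point $p\in L_1$ may contain up to $\card(S_{n_k})$ points of the $1/n_k$-separated set $S_{n_k}$, since $1/n_k<1/(2n_{k-1})$ and there is no doubling hypothesis on $M$; the separation of $L_j$ bounds how many points of $L_j$ lie near a given point of $S_{n_k}$, not the reverse. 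Shrinking the avoidance radius to $1/(2n_k)$ would fix the count, but then discreteness fails: for a fixed $x\in L_j$, later levels $L_k$ are only kept at distance $\geq 1/(2n_{k-1})\to 0$ and may accumulate at $x$. (Your discreteness check already has this defect: with avoidance radius $1/(2n_{\ell-1})$ at step $\ell$, the guaranteed gap from $L_\ell$ to $x\in L_k$ drops below your $r_x$ as soon as $\ell\geq k+2$.) This tension---a radius large enough to force discreteness versus small enough to keep the deletion count under control---is the genuine obstruction.

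The paper's remedy is an additional case at each inductive step. Working with subsets $\widetilde{M}_n$ of \emph{regular} cardinality $\Gamma_{k_n}^+$, one asks whether every ball around a lower-level point meets $\widetilde{M}_{n+1}$ in fewer than $\Gamma_{k_{n+1}}^+$ points. If so, regularity lets one discard the union of these intersections (essentially your move). If not, one instead places $L_{n+1}$ \emph{inside} a ball carrying full cardinality; that ball is small enough that at most one point of each earlier level $L_i$ can lie nearby, and those finitely many offending lower-level points are recorded in a set $N_{n+1}$ and deleted from the final $L$. Allowing finite deletions from below, rather than always trimming the new level from above, is the missing idea.
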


\begin{proof} For every $k \in \mathbb{N}$, let $M_k$ be some maximal $\frac{1}{2^k}$-separated subset of $M$. Denote $\Gamma_k := |M_k|$ for all $k \in \mathbb{N}$. If $\cof(\Gamma) > \omega_0$, then, since $\overline{\bigcup_{k \in \mathbb{N}} M_k} = M$, we have that there is $k_0 \in \mathbb{N}$ such that $\Gamma_{k_0} = \Gamma$ and so we take $L := M_{k_0}$.


Now, let us assume that $\cof(\Gamma) = \omega_0$. If there exists $k_0 \in \mathbb{N}$ such that $\Gamma_{k_0} = \Gamma$, we are done, since we can take once again $L:= M_{k_0}$. On the other hand, if this is not the case, we have that $\Gamma_k < \Gamma$ for every $k \in \mathbb{N}$ and $\cof(\Gamma) = \omega_0$. Since $\Gamma$ is not regular, we know that $\Gamma_k^+ < \Gamma$, for every $k \in \mathbb{N}$. Using this, and the fact that $\sup_{k\in\N}\Gamma_k = \Gamma$, it is straightforward to inductively construct a subsequence $\{\Gamma_{k_n}\}_{n\in\mathbb{N}}$ of $\{\Gamma_k\}_{k\in\mathbb{N}}$ with $\Gamma_{k_1}$ infinite and such that $\Gamma_{k_n}^+ < \Gamma_{k_{n+1}}$ for all $n\in\mathbb{N}$.

Now, for each $n\in\mathbb{N}$, let us consider a sequence of sets $\{\widetilde{M}_n\}_{n\in\N}$ such that $\widetilde{M}_n$ is a subset of $M_{k_{n+1}}$ with $|\widetilde{M}_n| = \Gamma_{k_n}^+$ for all $n\in\N$. Let us write $\widetilde{M}_n = \{x_{\alpha}^n: \alpha \in \Gamma_{k_n}^+\}$. 

For each $n\in\mathbb{N}$, each $j\leq n$, and each $\alpha\in \Gamma_{k_j}^+$, we define 
\begin{equation*}
A_{j,\, \alpha}^n := \widetilde{M}_{n+1} \cap B \left( x_{\alpha}^j,\, \frac{1}{2^{k_{j+1} + 1}} \right). 
\end{equation*}

We will inductively construct, for every $n \in \mathbb{N}$, a set $L_n \subseteq \widetilde{M}_n$ with $|L_n| = \Gamma_{k_n}^+$ and a finite subset $N_n\subseteq M$ such that whenever $j<n$, 
\begin{equation} \label{dist-de-los-Ls}
d(L_n,\, L_j \setminus N_n) \geq \frac{1}{2^{{k_{j+1}}+2}}. 
\end{equation}
Set $L_1:= \widetilde{M}_1$ and $N_1:=\emptyset$. Now, assuming that for some $n\in\mathbb{N}$ we have constructed $L_j$ and $N_j$ for all $j\leq n$, we can do the inductive step towards $n+1$.

\begin{itemize}

\item[(a)] Suppose that $|A_{j,\, \alpha}^n| < \Gamma_{k_{n+1}}^+$ for every $j\leq n$ and $\alpha \in \Gamma_{k_n}^+$. Since  $\Gamma_{k_n}^+ < \Gamma_{k_{n+1}}^+$ and $\Gamma_{k_{n+1}}^+$ is regular, we have that
\begin{equation*}
\left| \bigcup_{j \leq n,\, \alpha \in \Gamma_{k_j}^+} A_{j,\, \alpha}^n \right| < \Gamma_{k_{n+1}}^+ = \card(\widetilde{M}_{n+1}).
\end{equation*}
Therefore, the set 
$$L_{n+1}:= \widetilde{M}_{n+1} \setminus \bigcup_{ j \leq n,\, \alpha \in \Gamma_{k_j}^+} A_{j,\, \alpha}^n$$
satisfies $|L_{n+1}|=\Gamma_{k_{n+1}}^+$ and (\ref{dist-de-los-Ls}) holds by setting $N_{n+1} = \emptyset$. Indeed, for any $j\in\{1,\dots,n\}$, every point in $L_j$ is of the form $x_{\alpha}^j$ for some $\alpha\in\Gamma_{k_j}^+$. Hence, if there exists a point $p\in L_{n+1}$ such that $d(p,x_{\alpha}^j)<\frac{1}{2^{{k_{j+1}}+2}}$, then $p$ belongs to the set $A_{j,\, \alpha}^n$, which leads to a contradiction with the definition of $L_{n+1}$.

\item[(b)] Suppose now that $|A_{j_0,\, \alpha_0}^n| = \Gamma_{k_{n+1}}^+$ for some $j_0\leq n$ and some $\alpha_0 \in \Gamma_{k_{j_0}}^+$. Without loss of generality we consider $j_0\in\{1,\dots,n\}$ to be such that $|A_{j,\, \alpha}^n|<\Gamma_{k_{n+1}}^+$ for all $j_0<j\leq n$ and all $\alpha\in \Gamma_{k_j}^+$. Define 
$$L_{n+1}:= A_{j_0,\, \alpha_0}^n \setminus \bigcup_{j_0<j\leq n,\, \alpha\in\Gamma_{k_j}^+} A_{j,\, \alpha}^n.$$ 
Arguing as in case $(a)$, we obtain that $|L_{n+1}|=\Gamma_{k_{n+1}}^+$. Finally, define
$$ N_{n+1} := \left\{x\in M\colon \exists i\in\{1,\dots,j_0\}\text{ such that }x\in L_i\text{ and }d(x,L_{n+1})< \frac{1}{2^{{k_{i+1}}+2}}\right\}$$
which is finite since for each $i\in\{1,\dots,j_0\}$, there can only be at most a single point $x_i$ in $L_i$ such that $d(x_i,L_{n+1})< \frac{1}{2^{{k_{i+1}}+2}}$. Indeed, if $i=j_0$, the only point in $L_{j_0}$ that can satisfy that property is $x_{\alpha_0}^{j_0}$, since for every $\beta\in \Gamma_{k_{j_0}}^+\setminus \{\alpha_0\}$, $d(x_{\beta}^{j_0}, A_{j_0,\,\alpha_0}^{n})\geq \frac{1}{2^{k_{j_0 + 1}+1}}$. On the other hand, if $i<j_0$, if there were two points $x_i\neq y_i\in L_i$ with that property, we would have that
\begin{align*}
    d(x_i,y_i)&\leq d(x_i,A_{j_0,\, \alpha_0}^n)+d(y_i,A_{j_0,\, \alpha_0}^n)+\diam(A_{j_0,\, \alpha_0}^n)<\frac{1}{2^{{k_{i+1}}}},
\end{align*}
a contradiction with the fact that $L_i$ is $\frac{1}{2^{k_{i+1}}}$-separated.

Let us check that the sets $L_{n+1}$ and $N_{n+1}$ satisfy equation \eqref{dist-de-los-Ls} for each $j\in\{1,\dots,n\}$. Fix $j\in\{1,\dots,n\}$. If $j\leq j_0$ then the inequality follows directly by definition of $N_{n+1}$. Otherwise, if $j>j_0$, then the inequality holds following the same argument as in case (a).

\end{itemize}

Having discussed both possibilities, the induction is finished. To finish the proof, set $L:= \left( \bigcup_{n\in\mathbb{N}} L_n \right) \setminus \left( \bigcup_{n\in\mathbb{N}} N_n \right)$. It is clear that $|L|=\Gamma$, and using equation \eqref{dist-de-los-Ls}, it is straightforward to prove that all convergent sequences in $L$ are eventually constant, and thus $L$ is discrete. 
\end{proof}

As an application of Lemma \ref{prop:tent-c02} and Proposition \ref{Petr}, we have the following isometric result.

\begin{theorem} \label{gamma}
Let $M$ be a pointed metric space such that $\dens(M')=\Gamma$ for some infinite cardinal $\Gamma$. Then there is a linear subspace of $\sna(M)$ that is isometrically isomorphic to $c_0(\Gamma)$.
\end{theorem}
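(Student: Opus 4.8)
The plan is to reduce the statement to an application of Lemma \ref{prop:tent-c02} by producing, inside $M$, a family of pairs of points whose geometry satisfies the separation hypothesis of that lemma. The key observation is that $\dens(M') = \Gamma$ forces $M'$ to contain a large well-separated set of accumulation points, around each of which we can find auxiliary points arbitrarily close; pushing the auxiliary points close enough will make the pairs ``shrink'' so that the triangle-type inequality $d(x_\alpha,x_\beta) \geq d(x_\alpha,y_\alpha) + d(x_\beta,y_\beta)$ holds automatically.

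First I would split into two cases according to whether $\Gamma$ is countable or uncountable. If $\Gamma = \omega_0$, then $M'$ is infinite, hence $M$ is not uniformly discrete (an infinite set of cluster points cannot be uniformly discrete), so Theorem \ref{caso-compacto} already gives an isometric copy of $c_0 = c_0(\Gamma)$ inside $\sna(M)$ and we are done. So assume $\Gamma > \omega_0$. Apply Proposition \ref{Petr} to the metric space $M'$: since $\dens(M') = \Gamma > \omega_0$, there is a discrete set $L \subseteq M'$ with $\card(L) = \Gamma$. Enumerate $L = \{x_\gamma : \gamma \in \Gamma\}$. Since $L$ is discrete, each $x_\gamma$ has a positive separation radius $r_\gamma := R^L(x_\gamma) > 0$ inside $L$; more importantly, since $L$ is infinite and discrete, for each $\gamma$ the quantity $\rho_\gamma := \dist(x_\gamma, L \setminus \{x_\gamma\})$ is positive.

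Next I would choose the auxiliary points. Fix $\gamma \in \Gamma$. Because $x_\gamma \in M'$, it is a cluster point of $M$, so there are points of $M \setminus \{x_\gamma\}$ arbitrarily close to $x_\gamma$; pick $y_\gamma \in M$ with $0 < d(x_\gamma, y_\gamma) < \tfrac{1}{4}\rho_\gamma$ (and also $y_\gamma \neq x_\beta$ for all $\beta$, which is automatic if $d(x_\gamma,y_\gamma)$ is small enough, using $\rho_\gamma \leq \rho_\beta + d(x_\beta,x_\gamma)$ type bounds — or simply note $y_\gamma$ lies in a tiny ball around $x_\gamma$ disjoint from all the other $x_\beta$). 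Now for $\alpha \neq \beta$ we estimate, using the triangle inequality,
\begin{equation*}
d(x_\alpha, x_\beta) \geq d(x_\alpha, x_\beta) - d(x_\alpha, y_\alpha) - d(x_\beta, y_\beta) + d(x_\alpha,y_\alpha) + d(x_\beta,y_\beta),
\end{equation*}
so it suffices that $d(x_\alpha,y_\alpha) + d(x_\beta,y_\beta) \leq \tfrac12 d(x_\alpha,x_\beta)$. Since $d(x_\alpha,y_\alpha) < \tfrac14 \rho_\alpha \leq \tfrac14 d(x_\alpha,x_\beta)$ and likewise for $\beta$, this holds. Thus $\{x_\gamma\}_{\gamma \in \Gamma}$ and $\{y_\gamma\}_{\gamma\in\Gamma}$ satisfy all the hypotheses of Lemma \ref{prop:tent-c02}, and that lemma produces a linear subspace of $\sna(M)$ isometric to $c_0(\Gamma)$.

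The main obstacle I anticipate is the bookkeeping needed to guarantee the points are genuinely distinct in the way Lemma \ref{prop:tent-c02} requires ($x_\alpha \neq x_\beta$, $x_\gamma \neq y_\gamma$) while simultaneously making the $y_\gamma$'s close enough to $x_\gamma$; this is handled by shrinking the radius of the ball from which $y_\gamma$ is drawn to be strictly smaller than, say, $\tfrac14 \rho_\gamma$, which forces $y_\gamma$ to stay away from every other $x_\beta$. A secondary subtlety is that I am applying Proposition \ref{Petr} to $M'$ rather than to $M$ — one should check $M'$ is itself a metric space (it is, as a subset of $M$) and that $\dens$ of a subspace behaves as expected; here $\dens(M') = \Gamma$ is the hypothesis, so there is nothing to prove. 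If one wants a uniformly discrete $L$ (when $\cof(\Gamma) > \omega_0$) the argument is identical but cleaner since then $\rho_\gamma \geq r$ for a fixed $r > 0$.
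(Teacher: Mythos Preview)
Your proposal is correct and follows essentially the same route as the paper: for $\Gamma>\omega_0$ you apply Proposition~\ref{Petr} to $M'$ to obtain a discrete set $L\subseteq M'$ of cardinality $\Gamma$, then for each $x_\gamma\in L$ you choose $y_\gamma$ close enough to $x_\gamma$ so that the separation hypothesis of Lemma~\ref{prop:tent-c02} is met. The paper compresses the verification into the phrase ``$y_\gamma$ sufficiently close to $x_\gamma$'', while you spell out the bound $d(x_\gamma,y_\gamma)<\tfrac14\rho_\gamma$; the only other difference is that for $\Gamma=\omega_0$ the paper cites \cite[Theorem~3.2]{AMRTpre} directly, whereas you invoke Theorem~\ref{caso-compacto} (which is equally valid since $M'\neq\emptyset$ already forces $M$ not to be uniformly discrete). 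One cosmetic remark: your displayed inequality is a tautology ($d(x_\alpha,x_\beta)\geq d(x_\alpha,x_\beta)$); the condition you actually need, and then verify, is simply $d(x_\alpha,y_\alpha)+d(x_\beta,y_\beta)\leq d(x_\alpha,x_\beta)$, which follows immediately from your choice of $y_\gamma$.
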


\begin{proof}
The case where $\Gamma=\omega_0$ is already covered in \cite[Theorem 3.2]{AMRTpre}. Assume now that $\Gamma>\omega_0$. If we apply Proposition \ref{Petr} to the set $M'$, we find a discrete set $L\subseteq M'$ with $\card(L)=\dens(L)=\Gamma$ and such that all points of $L$ are cluster points of $M$. Finally, Lemma \ref{prop:tent-c02} can be applied now if we consider $\{x_\gamma\}_{\gamma\in \Gamma}$ to be $L$ itself and for each $\gamma\in\Gamma$, we set $y_\gamma$ to be sufficiently close to $x_\gamma$.
\end{proof}

\noindent \textbf{Acknowledgments.} The authors would like to thank Petr Hájek for some of his inputs on Proposition \ref{Petr} and Vladimir Kadets for some of his comments on Lemma \ref{prop:tent-c02}. They also want to thank Vicente Montesinos for fruitful conversations on the topic of the paper. This research was partially done when the second, third, and fourth authors were visiting the Universitat Jaume I, and they are thankful for the hospitality that they received.

\noindent 
\textbf{Funding information}: S. Dantas was supported by the Spanish AEI Project PID2019 - 106529GB - I00 / AEI / 10.13039/501100011033 and also by Spanish AEI Project PID2021-122126NB-C33 / MCIN / AEI / 10.13039 / 501100011033 (FEDER). R. Medina was supported by CAAS CZ.02.1.01/0.0/0.0/16-019/0000778, project SGS21 / 056 / OHK3 / 1T / 13, Spanish AEI Project PID2021-122126NB-C31, and MIU (Spain) FPU19/04085 Grant. A. Quilis was supported by PAID-01-19, by CAAS CZ.02.1.01/0.0/0.0/16-019/0000778 and by project SGS21 / 056 / OHK3 / 1T / 13. Ó. Roldán was supported by the Spanish Ministerio de Universidades, grant FPU17/02023, and by projects MTM2017-83262-C2-1-P / MCIN / AEI / 10.13039 / 501100011033 (FEDER) and PID2021-122126NB-C33 / MCIN / AEI / 10.13039 / 501100011033 (FEDER).



\begin{thebibliography}{99}

\bibitem{AMRTpre} A.~Avil\'es, G.~Mart\'inez-Cervantes, A.~Rueda~Zoca, and P.~Tradacete, Infinite dimensional spaces in the set of strongly norm-attaining Lipschitz maps, preprint. Available at ArXiv.org: \href{https://arxiv.org/abs/2204.12529}{https://arxiv.org/abs/2204.12529}.


\bibitem{CCGMR19} B.~Cascales, R.~Chiclana, L.~C.~Garc\'{\i}a-Lirola, M.~Mart\'{\i}n, and A.~Rueda~Zoca, On strongly norm attaining Lipschitz maps, \textit{J. Funct. Anal.} \textbf{277} (2019), no. 6, 1677--1717.

\bibitem{Chiclana} R. Chiclana, Cantor sets of low density and Lipschitz functions on $C^1$ curves, \textit{J. Math. Anal. Appl.} {\bf 516} (2022), no. 1, Paper No. 126489.

\bibitem{CGMR21} R.~Chiclana, L.~C.~Garc\'{\i}a-Lirola, M.~Mart\'{\i}n, and A.~Rueda-Zoca, Examples and applications of the density of strongly norm attaining Lipschitz maps, \textit{Rev. Mat. Iberoam.} \textbf{37} (2021), no. 5, 1917--1951.

\bibitem{CM19} R.~Chiclana and M.~Mart\'{\i}n, The Bishop-Phelps-Bollob\'as property for Lipschitz maps, \textit{Nonlinear Anal.} \textbf{188} (2019), 158--178.

\bibitem{CM2022} R.~Chiclana and M.~Mart\'{\i}n, Some stability properties for the Bishop-Phelps-Bollobás property for Lipschitz maps. \textit{Studia Math.} {\bf 264} (2022), no. 2, 121--147.

\bibitem{CCM20} G.~Choi, Y.~S.~Choi, and M.~Mart\'{\i}n, Emerging notions of norm attainment for Lipschitz maps between Banach spaces, \textit{J. Math. Anal. Appl.} \textbf{483} (2020), no. 1, Paper No. 123600, 24 pp.

\bibitem{CDW16} M.~C\'uth, M.~Doucha, and P.~Wojtaszczyk, On the structure of Lipschitz-free spaces, \textit{Proc. Amer. Math. Soc.} \textbf{144} (2016), no. 9, 3833--3846.

\bibitem{CJ17} M.~C\'uth and M.~Johanis, Isometric embedding of $\ell_1$ into Lipschitz-free spaces and $\ell_\infty$ into their duals, \textit{Proc. Amer. Math. Soc.} \textbf{145} (2017), no. 8, 3409--3421.

\bibitem{FHHMZ11} M.~Fabian, P.~Habala, P.~H\'ajek, V.~Montesinos, and V.~Zizler, \textit{Banach space Theory. The basis for linear and nonlinear analysis}, CMS Books in Mathematics/Ouvrages de Math\'ematiques de la SMC, Springer-Verlag, New York, 2011.



\bibitem{GPPR18} L.~C.~Garc\'{\i}a-Lirola, C.~Petitjean, A.~Proch\'azka, and A.~Rueda~Zoca, Extremal structure and duality of Lipschitz free spaces, \textit{Mediterr. J. Math.} \textbf{15} (2018), no. 2, Paper No. 69, 23 pp.

\bibitem{GPR17} L.~C.~Garc\'{\i}a-Lirola, C.~Petitjean, and A.~Rueda~Zoca, On the structure of spaces of vector-valued Lipschitz functions, \textit{Studia Math.} \textbf{239} (2017), no. 3, 249--271.


\bibitem{Godefroy01} G.~Godefroy, The Banach space $c_0$, \textit{Extracta Math.} \textbf{16} (2001), no. 1, 1--25.

\bibitem{Godefroy15} G.~Godefroy, A survey on Lipschitz-free Banach spaces, \textit{Comment. Math.} \textbf{55} (2015), no. 2, 89--118.

\bibitem{Godefroy16} G.~Godefroy, On norm attaining Lipschitz maps between Banach spaces, \textit{Pure Appl. Funct. Anal.} \textbf{1} (2016), no. 1, 39--46.


\bibitem{HN} P.~H\'ajek and M~Novont\'y, Some remarks on the structure of Lipschitz-free spaces, \textit{Bull. Belg. Math. Soc. Simon Stevin} \textbf{24} (2017), no. 2, 283–-304.


\bibitem{JMRpre} M.~Jung, M.~Martín, and A.~Rueda Zoca, Residuality in the set of norm attaining operators between Banach spaces, preprint. Available at ArXiV.org: \href{https://arxiv.org/abs/2203.04023}{https://arxiv.org/abs/2203.04023}.

\bibitem{KMS16} V.~Kadets, M.~Mart\'{\i}n, and M.~Soloviova, Norm-attaining Lipschitz functionals, \textit{Banach J. Math. Anal.} \textbf{10} (2016), no. 3, 621--637.



\bibitem{KRpre} V.~Kadets and \'O.~Rold\'an, Closed linear spaces consisting of strongly norm attaining Lipschitz mappings. To appear in \textit{Rev. R. Acad. Cienc. Exactas Fís. Nat. Ser. A Mat. RACSAM}.


\bibitem{Kunen} K.~Kunen, \textit{Set theory. An introduction to independence proofs}, Studies in Logic and the Foundations of Mathematics, \textbf{102}, North-Holland Publishing Co., Amsterdam-New York, 1980.




\bibitem{Rmoutil17} M.~Rmoutil, Norm-attaining functionals need not contain 2-dimensional subspaces, \textit{J. Funct. Anal.} \textbf{272} (2017), no. 3, 918--928.

\end{thebibliography}
\end{document}